\def\1{\bm{1}}
\DeclareMathAlphabet{\mathsfit}{\encodingdefault}{\sfdefault}{m}{sl}
\SetMathAlphabet{\mathsfit}{bold}{\encodingdefault}{\sfdefault}{bx}{n}
\newtheorem{thm}{Theorem}[section]
\newtheorem{lemma}[thm]{Lemma}
\newtheorem{assume}{Assumption}
\newtheorem{remark}[thm]{Remark}
\newtheorem{prop}[thm]{Proposition}
\newenvironment{proof}{Proof:}{\hfill$\square$}
\definecolor{bgcolor}{rgb}{0.8,1,1}
\definecolor{bgcolor2}{rgb}{0.8,1,0.8}
\newcommand{\norm}[1]{\left\|#1\right\|}
\def\sqr#1#2{{\vcenter{\vbox{\hrule height.#2pt
				\hbox{\vrule width.#2pt height#1pt \kern#2pt
					\vrule width.#2pt}
				\hrule height.#2pt}}}}
\def\approxleq{ \kern3pt \mbox{\raisebox{.6ex}{$<$}} \kern-8pt
	\mbox{\raisebox{-.6ex}{$\sim$}} \kern5pt}
\def\norm#1{\|#1 \|}
\def\inprod#1#2{\langle#1,\,#2 \rangle}
   \def\cT{{\cal T}}
    \def\cG{{\cal G}} 
\def\cS{{\cal S}}   \def\cF{{\cal F}}  
\def\cI{{\cal I}} \def\cX{{\cal X}} \def\cY{{\cal Y}} \def\cZ{{\cal Z}}
  \def\cL{{\cal L}}
\def\blkdiag{\textrm{\bf blkdiag}}
\def\ni{\noindent}
\def\bx{\bar{x}}
\def\bz{\bar{z}}
\def\by{\bar{y}}
\newlength{\len}
\title{An Accelerated Proximal Alternating Direction Method of Multiplier for Optimal Decentralized Control of Uncertain Systems}
\author{Bo Yang%\thanks{Equal Contribution.}
    \thanks{Department of Mathematics,
        Beijing University of Technology, beijing, China (\texttt{bbo\_yang@163.com}).}
    \and
    Xinyuan Zhao\thanks{Department of Mathematics,
        Beijing University of Technology, beijing, China (\texttt{xyzhao@bjut.edu.cn}). The research of Xinyuan Zhao is supported in part by the National Natural Science Foundation of China 12271015.}
    \and
    Xudong Li\thanks{School of Data Science, Fudan University, shanghai, China (\texttt{lixudong@fudan.edu.cn}). The research of Xudong Li is supported in part by the National Natural Science Foundation of China 12271107.}	
    \and
    Defeng Sun\thanks{Department of Applied Mathematics, The Hong Kong Polytechnic University, Hong Kong, China (\texttt{defeng.sun@polyu.edu.hk}). The research of Defeng Sun is supported in part by the Hong Kong Research Grant Council Grant  N\_PolyU504/19.}
}
\begin{document}
\maketitle

\begin{abstract}
To ensure the system stability of the $\mathcal{H}_{2}$-guaranteed cost optimal decentralized control problem (ODC), we formulate an approximate semidefinite programming (SDP) problem based on the block diagonal structure of the gain matrix of the decentralized controller. To reduce data storage and improve computational efficiency, we apply the property of the Kronecker product to vectorize the SDP problem into a conic programming (CP) problem. Then, a proximal alternating direction method of multipliers (PADMM) is proposed to solve the dual of the resulted CP. By establishing the equivalence between the semi-proximal ADMM and the (partial) proximal point algorithm, we find the non-expansive operator of PADMM so that we can use Halpern fixed-point iteration to accelerate the proposed algorithm. Finally, we verify that the sequence generated by the proposed accelerated PADMM has a fast convergence rate for the Karush-Kuhn-Tucker residual. Through numerical experiments, it has been demonstrated that the accelerated algorithm surpasses the renowned COSMO, MOSEK, and SCS solvers in efficiently solving large-scale CP problems, particularly those arising from $\mathcal{H}_{2}$-guaranteed cost ODC problems.
\end{abstract}

\section{Introduction}\label{sec:introduction}
Numerous complex real-world systems, such as aircraft formations, automated highways, and power systems, consist of a vast number of interconnected subsystems. In these interconnected systems, the controller can only access the status information of each subsystem. To reduce the computational and communication complexity of the overall controller, the optimal decentralized control (ODC) problem with structural constraints is considered to stabilize the system and seek optimal performance. Due to its significance, the ODC problem has attracted research attention since the late 1970s \cite{Siljak11, Lunze92}.

\quad The design of a centralized controller without structural constraints for the linear quadratic regulator, linear quadratic Gaussian, $\mathcal{H}_{2}$, and $\mathcal{H}_{\infty}$ control problems typically involves solving algebraic Riccati equations. However, extending this technique to decentralized control situations is challenging due to the specific sparsity constraints inherent in the gain matrix of the decentralized controller. Furthermore, it has been known that designing a globally optimal decentralized controller is generally an NP-hard problem \cite{Witsen1968, Tsitsi1984}. Great effort has been devoted to investigating this complex problem for special types of systems, including spatially distributed systems \cite{Jadbabaie}, dynamic decoupling systems \cite{Keviczky}, weakly connection systems \cite{Siljak}, and strongly connected systems \cite{Lavaei}. Early efforts focused on designing methods based on parameterization techniques \cite{GEROMEL19941565, Chow}, which were then evolved into matrix optimization methods \cite{Scorletti, Ikeda}. While these methods have their merits, they are computationally demanding and do not always ensure global optimality. Moreover, they can be particularly resource-intensive for large-scale systems and susceptible to numerical instability. Given these limitations, it is evident that an alternative approach is necessary for designing decentralized controllers.

\quad Influenced by recent achievements of convex optimization, the focus of control synthesis problems has shifted towards finding a convex formulation that can be efficiently solved. Various convex relaxation techniques, such as those based on linear matrix inequality, semidefinite programming (SDP) \cite{Todd}, and second-order cone programming \cite{Ling}, have gained popularity in addressing the ODC problems. In addition to these convex relaxation methods, the quadratic invariance (QI) is shown to be a necessary and sufficient condition for the set of Youla parameters to be convex \cite{Lessard}, and guarantees the existence of a sparse controller \cite{Rotkowitz, SLal, Bitar,8727508}. Recently, the concept of sparse invariance was introduced by \cite{Kamgarpour}, which can be applied to design optimally distributed controllers as a method to overcome the QI restrictions. However, these invariance conditions are computationally expensive, if feasible, to verify for practical applications.
To attain high computational effectiveness, more recently, by adapting the inexact sGS decomposition technique developed in \cite{Lischur, Lisgs} for solving multi-block convex problems, the sGS semi-proximal augmented Lagrangian method was first used by \cite{JMa, 9589405} to solve conic programming (CP) relaxation of the $\mathcal{H}_{2}$-guaranteed cost ODC problem. Meanwhile, the designed method can suitably guarantee a decentralized structure, robust stability, and robust performance. However, the computational efficiency is still quite inadequate, and one possible reason is perhaps due to the fact that the algorithm must compute the projections onto the semidefinite cones twice at each iteration. In light of this, there is a high possibility to design a more efficient algorithm for solving the $\mathcal{H}_{2}$-guaranteed cost ODC problem.

\quad Acceleration techniques play a crucial role in enhancing algorithmic efficiency, attracting significant attention in optimization and related fields. Inspired by Nesterov’s accelerated gradient method \cite{Nester269}, Kim \cite{Kim2021} introduced an accelerated scheme of the proximal point algorithm (PPA) for solving the maximally monotone inclusion problem. This accelerated scheme exhibits a rapid $\mathcal{O}(1/k)$ convergence rate for the fixed-point residual of the corresponding maximally monotone operator, where $k$ denotes the number of iterations. Note that solving the maximal monotone inclusion problem can be restated equivalently as finding the fixed-point of the non-expansive operator \cite{BC17}. A more general Halpern iteration was introduced earlier \cite{Halpern} for approximating the fixed-point of the non-expansive operator. Specifically, consider a non-expansive mapping $T$ on the Hilbert space $\mathcal{H}$ and a fixed $x_{0}\in \mathcal{H}$. The Halpern fixed-point iteration takes the following form:
\begin{equation*}
x_{k+1}:=\lambda_{k}x_{0}+(1-\lambda_{k})T(x_{k}),
\end{equation*}
where the stepsizes $\lambda_k\in (0,1)$. In the Hilbert space, Lieder \cite{Lied21} recently established that the Halpern iteration with stepsizes $\lambda_k=1/(k+2)$ achieves an accelerated rate of $\mathcal{O}(1/k)$ on the norm of the fixed-point residual. Notably, the connections between the Halpern iteration with stepsizes $\lambda_k=1/(k+2)$ \cite{Halpern,Lied21} and Kim's acceleration \cite{Kim2021} were highlighted in \cite[Proposition 5]{Contreras}. Furthermore, Contreras and Cominetti in \cite[Proposition 2]{Contreras} demonstrated that the best possible rate for general Mann iterations, including the Halpern iteration, in normed spaces is bounded from below by $\mathcal{O}(1/k)$. Thus, it is natural to employ the Halpern iteration or Kim's acceleration to accelerate the convergence rate of the fixed-point residual. Along this line, an efficient Halpern-Peaceman-Rachford algorithm for solving the two-block convex programming problems including the Wasserstein barycenter problem was introduced in \cite{Zhang2022AnEH}. This algorithm achieved an appealing $\mathcal{O}(1/k)$ non-ergodic convergence rate with respect to the Karush-Kuhn-Tucker (KKT) residual. Additionally, Kim \cite{Kim2021} also proposed an accelerated alternating direction method of multipliers (ADMM) and proved an $\mathcal{O}(1/k)$ convergence rate with respect to the primal feasibility only. The convergence of these accelerated methods depends on specific assumptions, such as the full column rank of the coefficient matrix in the constraints or the strong convexity of the objective functions \cite{DRA92}. However, it's important to note that the CP problem stemming from the $\mathcal{H}_{2}$-guaranteed cost ODC problem may not necessarily satisfy these assumptions. To tackle this issue, we present an innovative accelerated proximal ADMM (PADMM) based on the Halpern iteration for solving the CP problem. Specifically, proximal terms are integrated to ensure the existence of optimal solutions to the corresponding subproblems, and the Halpern iteration \cite{Lied21} is employed as an acceleration strategy. In comparison to the non-ergodic $\mathcal{O}(1/\sqrt{k})$ iteration complexity of the majorized ADMM presented in \cite{Cui}, this paper demonstrates that the proposed accelerated PADMM achieves a faster $\mathcal{O}(1/k)$ convergence rate for the fixed-point KKT residual.

\quad In this paper, employing a parameterized approach, we approximate the ODC problem with an SDP problem that ensures the block-diagonal structure of the feedback matrix. Additionally, the stability of the system with parameter uncertainties is guaranteed \cite{JMa}. To enhance the algorithm's efficiency and minimize matrix operations, we transform the original SDP problem into an equivalent vector form, and the optimal solutions are efficiently obtained by using the accelerated PADMM.

\subsection{Main Contribution}
The contributions made in this paper to the current literature can be summarized as follows:
\begin{itemize}
\item [i)] An equivalence between the semi-proximal ADMM and the (partial) PPA is established. Utilizing this equivalence, we propose an accelerated PADMM by applying the Halpern iteration to the (partial) PPA. The proposed accelerated PADMM demonstrates a convergence rate of $\mathcal{O}(1/k)$ with respect to the norm of the KKT residual.
    
\item[ii)] By leveraging the sparsity of the problem data, we introduce a lifting technique to solve the linear systems of subproblems in the (accelerated) PADMM. Consequently, the proposed accelerated PADMM can efficiently address the large-scale CP problem originating from the ODC problem.

\item[iii)]We implement the accelerated PADMM in C language (using the MSVC compiler) and compare its performance to COSMO \cite{GarCOSMO}, MOSEK \cite{mosek}, and SCS \cite{Parikh} in solving small-medium and large-scale problems. Extensive numerical results demonstrate the efficiency and robustness of the proposed algorithm.
\end{itemize}
The remainder of this paper is organized as follows: In Section \ref{sec2}, we first introduce the generic form of the $\mathcal{H}_{2}$-guaranteed cost ODC for uncertain systems. Subsequently, we relax the $\mathcal{H}_{2}$-guaranteed cost ODC problem to the CP problem. In Section \ref{sec3}, we establish the equivalence between the semi-proximal ADMM and the (partial) PPA. Based on this equivalence, we propose an accelerated PADMM for solving the CP problem. Moreover, we provide a fast implementation of the proposed algorithm for solving the CP in Section \ref{sec4}. Section \ref{sec5} presents extensive numerical experiments demonstrating the superiority of our algorithm compared to others. Finally, we conclude the paper in Section \ref{sec6}.

\vspace{4pt}
\noindent\textbf{Notation.} The notation used in this paper is defined as follows:
\begin{enumerate}
\item[$\bullet$] Let $\mathbb{R}^{n}$ denote the $n$-dimensional real space. The identity operator, denoted $\mathcal{I}$, is the operator on $\mathbb{R}^{n}$ such that $\mathcal{I} v = v$, $\forall \, v \in \mathbb{R}^{n}$.
 %  denote identity operator equipped with inner product $\langle \cdot,\cdot\rangle,$ induced norm $\|\cdot\|,$ 
\item[$\bullet$] $\mathbb{S}^{n}$ is the set of all $n\times n$ real symmetric matrices; {$\mathbb{S}_{++}^{n}~(\mathbb{S}_{+}^{n})$} is the cone of positive (semidefinite) matrices in $\mathbb{S}^{n}$ with the trace inner product $\langle \cdot,  \cdot \rangle$ and the Frobenius norm $\| \cdot \|$. We may sometimes write $X\succ0$ $(X\succeq0)$ to indicate that $X\in \mathbb{S}_{++}^{n}~(\mathbb{S}_{+}^{n})$.   We use $\otimes$ to denote the Kronecker product of matrices. 

\item[$\bullet$] Let $\blkdiag \left\{A_{1},~A_{2},~\ldots,~A_{m}\right\}$ denote the  block diagonal matrix with diagonal entries $A_{1},~A_{2},~\ldots,~A_{m}$ and $\textbf{vec}(A)$ denote the column vector formed by stacking columns of $A$ one by one.

\item[$\bullet$] $e_{k}\in\mathbb{R}^{p\times 1}$ means that the $k$th element is one and all other elements are zeros.

\item[$\bullet$] For the set $\mathcal{X} \subseteq \mathbb{R}^{n},$ $\delta_{\mathcal{X}}$ is the indicator function for $\mathcal{X}$, i.e., $\delta_{\mathcal{X}}=0$ if $x\in \mathcal{X}$ and $\delta_{\mathcal{X}}=+\infty$ if $x\in \mathcal{X}.$ For a closed convex set $\mathcal{D}$, the Euclidean projector onto $\mathcal{D}$ is defined by $\Pi_{\mathcal{D}}(x):=\underset{s\in \mathcal{D}}{\mathrm{argmin}}\{\|s-x\|\}.$

\item[$\bullet$] The set of fixed points of an operator ${\cal T}: \mathcal{X}\rightarrow \mathcal{X}$ is denoted by  $\text{Fix}({\cal T})$, i.e.,
$\text{Fix}({\cal T}):=\{x\in \mathcal{X} \, |\, x={\cal T}(x)\}.$
\end{enumerate}

\section{Optimal decentralized control problem}\label{sec2}

In this section, we first introduce the ODC problem. Subsequently, we present an approximated CP model for the ODC problem.
\subsection{Problem statement}
Consider a linear time-invariant (LTI) system with additive disturbance in a compact form as follows:
\begin{equation}\label{a1}
\begin{aligned}
\dot{x}(t)&=Ax(t)+B_{2}u(t)+B_{1}w(t),\\
z(t)&=Cx(t)+Du(t)
\end{aligned}   
\end{equation}
with a static state feedback controller 
\begin{equation}\label{a2}
u(t)=-Kx(t).
\end{equation}
In this setup, \(x(t)\in \mathbb{R}^{n}\) represents the state vector, \(u(t)\in \mathbb{R}^{m}\) is the control input vector, \(w(t)\in \mathbb{R}^{l}\) denotes the exogenous disturbance, and \(z(t)\in \mathbb{R}^{q}\) is the controlled output. The matrices \(A \in \mathbb{R}^{n\times n}\), \(B_{1} \in \mathbb{R}^{n \times l}\), \(B_{2} \in \mathbb{R}^{n \times m}\), \(C  \in \mathbb{R}^{q \times n}\), and \(D  \in \mathbb{R}^{q \times m}\) are involved. Additionally, we consider a feedback gain matrix \(K\in \mathbb{R}^{m\times n}\) with a decentralized structure as defined:
\begin{equation}\label{a3}
	K =\left[
	\begin{array}{ccc}
		K_{1}&  & 0 \\
		& \ddots & \\
		0 &  &K_{m} \\
	\end{array}
	\right], 
\end{equation}
where \(K_i \in \mathbb{R}^{1 \times D_i}\), for \(i=1, \ldots, m\), and \(\sum^m_{i=1} D_i = n\).

\begin{assume}\label{ass:1}
The state matrix $C$ and control matrix $D$ satisfy $C^{\top}D=0,$ which means that there is no cross-weighting between the state variables and control variables, and the control weighting matrix $D$ satisfies $D^{\top}D\succ 0,$ with the standard assumptions that $(A,B_{2})$ is stabilizable and $(A,C)$ is detectable.
\end{assume}

\quad The objective function associated with the linear systems \eqref{a1} and \eqref{a2} is defined as
\begin{equation}\label{obj1}
J := \int z(t)^{\top}z(t) dt.
\end{equation}
By the Laplace transformation {\cite[Appendix C]{Desoer}}, minimizing the objective function \eqref{obj1} is equivalent to minimizing the following $\mathcal{H}_{2}$-guaranteed cost ODC problem
\begin{equation}\label{Pro5}
	\underset{K\in \mathscr{T}}{\mathrm{\min}}\|H(s)\|_{2}= \textrm{Tr}((C-DK)W_c(C-DK)^{\top}), 
\end{equation}
where the transfer function $H(s)=(C-DK)(sI-A+B_{2}K)^{-1}B_{1}$, and $W_c$ is the controllability Gramian matrix related to closed-loop system.
In order to ensure the robustness of the closed-loop system, we assume that the system matrices $(A, B_2)$ are unknown matrices with estimates $(\bar{A}, \bar{B}_2)$ available to the control designer. In particular, we consider 
\begin{equation*}
A = \bar{A}+\Delta_{A},B_2= \bar{B}_2+\Delta_{B_{2}},
\end{equation*}
in which the parametric uncertainty matrices $\Delta_{A}$ and $\Delta_{B_{2}}$ are unknown and belong to convex the following convex compact set of polytope type \cite{uncerLIn}, representable by 
\begin{align}
\mathcal{U}:=\bigg\{(\Delta_{A},\Delta_{B_{2}})\mid (\Delta_{A},\Delta_{B_{2}})= \sum_{i=1}^{M}\xi_{i}(\Delta_{A}^{i},\Delta_{B_{2}}^{i}), \sum_{i}^{M}\xi_{i}=1,~\xi_{i}\geq0\bigg\}\notag. 
\end{align}
Here $\left\{\Delta_{A}^{1},\ldots,\Delta_{A}^{M}\right\}$ and $\left\{\Delta_{B_{2}}^{1},\ldots,\Delta_{B_{2}}^{M}\right\}$ are known vertex matrices with fixed $M>0.$

% \subsection{Controller synthesis with convex parametrization}

\subsection{An approximated CP model of the ODC problem}
For notational convenience, we define the data matrix $\mathrm{\Phi}$ and parameter matrix $W$ by the following form:
\begin{align}  
	\mathrm{\Phi}:=\left[
	\begin{array}{cc}
		C^{\top}C &0  \\
		0 & D^{\top}D \\
	\end{array}
	\right]\in \mathbb{S}^{p}, \quad
	W:=\left[
	\begin{array}{cc}
		W_{1} & W_{2} \\
		W_{2}^{\top} & W_{3} \\
	\end{array}
	\right]\in \mathbb{S}^{p},\notag
\end{align}
where $W_{1}\in \mathbb{S}^{n}_{++}$, $W_{2}\in \mathbb{R}^{n\times m}$, $W_{3}\in \mathbb{S}^{m}$, $p=n+m.$ The following lemma shows that under an appropriate convex restriction, the feedback gain matrix $K$ derived from the parameter matrix $W$ can preserve the decentralized structure, robust stability, and robust performance, in the presence of parameter uncertainties.
\begin{lemma}{\cite[Theorem 1]{JMa}}\label{LemaJMa}
For $i=1,\ldots,M,$ the function $\mathcal{F}_{i}:\mathbb{S}^{p}\rightarrow \mathbb{R}^{n\times n}$ is defined as follows:
\begin{align}
	\mathcal{F}_{i}(W):=F_{i}WE^{\top}+EWF_{i}^{\top}+B_{1}B_{1}^{\top},\notag
\end{align}
where
\begin{align}
	E:=\left[\begin{array}{cc}
		I_{n\times n} & 0_{n\times m}\\
	\end{array}\right]\in \mathbb{R}^{n\times p}, \quad
	F_{i}:=\left[
	\begin{array}{cc}
		A^{i} & -B_{2}^{i} \\
	\end{array}
	\right]\in \mathbb{R}^{n\times p}. \notag
\end{align}
Based on the partition of the decentralized structure of the feedback gain matrix $K$ defined in \eqref{a3}, we can define the set
\begin{eqnarray} \label{Wset}
	\mathcal{W} &:=& \left\{ \; W \in \mathbb{S}^p \ | \  W \succeq 0, \; \mathcal{F}_{i}(W) \preceq 0, \; \forall \ i =1,2, \ldots, M,  \right.  \\
&& \quad W_{1}=\blkdiag \left\{ W_{1,D_1}, \, W_{1,D_2},\ldots, \, W_{1,D_m} \right\},\nonumber \\
&& \quad	W_{2} =\blkdiag \left\{W_{2,D_1}, \, W_{2,D_2}, \ldots, \, W_{2,D_m} \right\}, \nonumber  \\
&& \quad \left. W_{1,D_i}\in\mathbb{S}^{D_{i}}, \, W_{2,D_i}\in \mathbb{R}^{D_{i}}, \; \forall \ j=1,2, \ldots, m \right\} \nonumber 
\end{eqnarray}
and
\begin{equation*}
\mathcal{K} := \left\{W_{2}^{\top} W_{1}^{-1}  \ | \ W \in \mathcal{W} \right\}.
\end{equation*}
Then,
\begin{itemize}
\item[$(a)$] $K \in \mathcal{K}$ holds the decentralized structure in $(\ref{a3})$. 
\item[$(b)$] $K \in \mathcal{K}$ stabilizes the closed-loop system with uncertainties.
\item[$(c)$] $K \in \mathcal{K}$ gives $ \langle \mathrm{\Phi}, W  \rangle \ge
\|H_i(s)\|_{2}^{2}$, $\forall \ i=1,2, \ldots, M$, where $\| H_i(s) \|_2$ represents the $\mathcal{H}_2$-norm with repect to i-th extreme system.
\end{itemize}
\end{lemma}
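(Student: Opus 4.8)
The statement to prove is Lemma~\ref{LemaJMa}, which is cited as \cite[Theorem 1]{JMa}. Since the result is attributed to prior work, the natural route is to reconstruct the argument it rests on, which combines a Lyapunov-type characterization of robust stability with a Schur-complement manipulation that converts the closed-loop $\mathcal{H}_2$ cost into the linear quantity $\langle \Phi, W\rangle$. The plan is to prove the three claims $(a)$--$(c)$ in order, treating $W_1 \succ 0$ (guaranteed since $W\succeq 0$ with the block structure and, implicitly, strict positivity on the $W_1$ block) so that the change of variables $K = W_2^\top W_1^{-1}$ is well defined.

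First, for $(a)$: I would observe that both $W_1$ and $W_2$ are block diagonal with blocks conforming to the partition $n = \sum_{i=1}^m D_i$, hence $W_1^{-1}$ is block diagonal with the same partition, and therefore $K = W_2^\top W_1^{-1}$ is block diagonal with blocks $K_i = W_{2,D_i}^\top W_{1,D_i}^{-1} \in \mathbb{R}^{1\times D_i}$. This is exactly the structure~\eqref{a3}, so $(a)$ follows purely from the algebra of block-diagonal matrices. Next, for $(b)$ and $(c)$ jointly: the key step is to interpret $\mathcal{F}_i(W) \preceq 0$. Writing $E W E^\top = W_1$, $E W F_i^\top = W_1 A^{i\top} - W_2 B_2^{i\top}$, and using $K = W_2^\top W_1^{-1}$, one gets $F_i W E^\top = (A^i - B_2^i K) W_1$; hence $\mathcal{F}_i(W) = (A^i - B_2^i K) W_1 + W_1 (A^i - B_2^i K)^\top + B_1 B_1^\top \preceq 0$. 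This is precisely a Lyapunov inequality certifying that $A^i - B_2^i K$ is Hurwitz (stability of the $i$-th vertex system) with Lyapunov certificate $W_1 \succ 0$; since robust stability over the polytope $\mathcal{U}$ follows from simultaneous stability at all vertices with a common Lyapunov matrix (a standard polytopic-uncertainty argument, using that the closed-loop matrix at any $(\Delta_A,\Delta_{B_2})\in\mathcal{U}$ is a convex combination of the vertex closed-loop matrices), claim $(b)$ follows. For $(c)$, the controllability Gramian $\widehat{W}_i$ of the $i$-th closed-loop system solves $(A^i - B_2^i K)\widehat{W}_i + \widehat{W}_i(A^i - B_2^i K)^\top + B_1 B_1^\top = 0$; comparing with the inequality above and invoking monotonicity of solutions to Lyapunov equations/inequalities (if $\mathcal{A}X + X\mathcal{A}^\top + Q \preceq 0$ and $\mathcal{A}$ Hurwitz, then $X \succeq$ the Gramian $\int_0^\infty e^{\mathcal{A}t} Q e^{\mathcal{A}^\top t}\,dt$), we obtain $W_1 \succeq \widehat{W}_i$. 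Then $\|H_i(s)\|_2^2 = \mathrm{Tr}((C - DK)\widehat{W}_i (C-DK)^\top) \le \mathrm{Tr}((C-DK) W_1 (C-DK)^\top)$, and a direct expansion using $C^\top D = 0$ (Assumption~\ref{ass:1}) shows $\mathrm{Tr}((C-DK)W_1(C-DK)^\top) = \mathrm{Tr}(C^\top C\, W_1) + \mathrm{Tr}(K^\top D^\top D\, K\, W_1)$; substituting $K = W_2^\top W_1^{-1}$ turns the second term into $\mathrm{Tr}(D^\top D\, W_2^\top W_1^{-1} W_2)$, and the block relation $W \succeq 0$ gives $W_3 \succeq W_2^\top W_1^{-1} W_2$, so the whole expression is bounded above by $\mathrm{Tr}(C^\top C\, W_1) + \mathrm{Tr}(D^\top D\, W_3) = \langle \Phi, W\rangle$. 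Chaining the two inequalities yields $(c)$.

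The main obstacle I anticipate is making the monotonicity argument for $(c)$ fully rigorous: one must be careful that $A^i - B_2^i K$ is genuinely Hurwitz (so the Gramian integral converges and the comparison principle applies), which is supplied by $(b)$, and that the Schur-complement inequality $W_3 \succeq W_2^\top W_1^{-1} W_2$ is used in the correct direction — this is where Assumption~\ref{ass:1} ($D^\top D \succ 0$, $C^\top D = 0$) does real work, since without the cross-term vanishing the expansion of $\mathrm{Tr}((C-DK)W_1(C-DK)^\top)$ would not split cleanly into a $\Phi$-weighted form. A secondary point requiring care is the passage from vertexwise stability to robustness over all of $\mathcal{U}$: one needs that the map $(\Delta_A,\Delta_{B_2}) \mapsto (A - B_2 K)$ is affine so that the common Lyapunov certificate $W_1$ works simultaneously for the whole polytope; this is immediate from the definition of $\mathcal{U}$ but should be stated explicitly. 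Everything else is routine linear algebra, so I would keep those parts brief and concentrate the exposition on the Lyapunov/Gramian comparison.
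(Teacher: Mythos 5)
The paper does not prove this lemma at all: it is imported verbatim as \cite[Theorem 1]{JMa}, so there is no in-paper argument to compare against. Judged on its own merits, your reconstruction is the standard and essentially correct route: the block-diagonal algebra for $(a)$, the identity $F_iWE^\top=(A^i-B_2^iK)W_1$ turning $\mathcal{F}_i(W)\preceq 0$ into a vertex Lyapunov inequality with the common certificate $W_1$ for $(b)$, and the Gramian comparison $\widehat{W}_i\preceq W_1$ combined with $C^\top D=0$ and the Schur-complement bound $W_3\succeq W_2^\top W_1^{-1}W_2$ for $(c)$. All of these steps check out, including the affinity of $(\Delta_A,\Delta_{B_2})\mapsto A-B_2K$ that lets the single $W_1$ certify the whole polytope.

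The one point you should not wave through is the Hurwitz conclusion in $(b)$. The inequality $(A^i-B_2^iK)W_1+W_1(A^i-B_2^iK)^\top\preceq -B_1B_1^\top$ with $W_1\succ 0$ only forces the eigenvalues of $A^i-B_2^iK$ into the closed left half-plane when $B_1B_1^\top$ is merely positive semidefinite; a skew-symmetric closed-loop matrix with $B_1=0$ satisfies it without being Hurwitz. You need either $B_1B_1^\top\succ 0$ or controllability of the closed-loop pair with respect to $B_1$ to rule out imaginary-axis eigenvalues; this hypothesis lives in \cite{JMa} (and holds in the paper's experiments, where $B_1=I$), but it is not stated in the lemma as reproduced here, and your proof of $(c)$ genuinely relies on it since the Gramian integral must converge before the comparison $\widehat{W}_i\preceq W_1$ makes sense. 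State that assumption explicitly and the argument is complete.
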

\ni 

\quad On the one hand, we observe that $\langle \mathrm{\Phi}, W\rangle$ serves as an upper bound for $\|H(s)\|_{2}^{2}$ according to Lemma \ref{LemaJMa}. On the other hand, the block diagonal structure of $W_1$ and $W_2$ in \eqref{Wset} can be represented by the following linear systems:
\begin{align}\label{eqvij}
	V_{j1}WV_{j2}^{\top}=0, \quad j=1,\ldots,N,
\end{align}
where $N=(3m(m-1))/2,$ and the matrices $V_{j1}\in \mathbb{R}^{v_{j1}\times p}$ and $V_{j2}\in \mathbb{R}^{v_{j2}\times p}$ are provided in Appendix \ref{appxa}. Therefore, by introducing slack variables $S_{i}\in \mathbb{S}_{+}^{n}, i=1,\ldots, M,$ the design of the controller $K$ inherent in the $\mathcal{H}_{2}$-guaranteed cost decentralized control problem \eqref{Pro5} can be relaxed into the following SDP problem:
\begin{align}\label{pro22}
\underset{W,S_{i}}{\mathrm{\min}}~&\langle \mathrm{\Phi},W\rangle\notag\\
\textrm{s.t.}~
&S_{i}+\mathcal{F}_{i}(W)=0,\notag\\
&{V_{j1}WV_{j2}^{\top}}\hspace{0.39cm}=0,\hspace{0.24cm}j=1,\ldots,N,\\
&W\in \mathbb{S}_{+}^{p},~S_{i}\in \mathbb{S}_{+}^{n}, i=1,\ldots,M.\notag
\end{align}

\quad To reduce the heavy cost of matrix multiplication and to accelerate the practical performance, we consider rewriting the above problem in an equivalent vector form. For a symmetric matrix \(S \in \mathbb{S}^{n}\), there exists a matrix \(T_{n}\in\mathbb{R}^{\frac{n(n+1)}{2}\times n^{2}}\) such that 
\begin{align}\label{svec8}
&\textbf{svec}(S)= T_n\textbf{vec}(S) \quad \textrm{with} \quad T_{n}T_{n}^{\top}=I_{\frac{n(n+1)}{2}},
\end{align}
where the symmetric vectorization operator \(\textbf{svec} : \mathbb{S}^{n} \rightarrow \mathbb{R}^{\frac{n(n+1)}{2}}\) is defined by
\begin{align}
&\textbf{svec}(S):= \left[
           \begin{array}{ccccccc}
             s_{11},\sqrt{2}s_{21},\cdots,\sqrt{2}s_{n1},\cdots, s_{nn}
           \end{array}
         \right]^{\top}\notag.
\end{align}
For notational convenience, we define the following quantities:
\begin{align}
w&=\textbf{svec}(W)\in \mathbb{R}^{\frac{p(p+1)}{2}},~r=\textbf{svec}(\mathrm{\Phi})\in \mathbb{R}^{\frac{p(p+1)}{2}}, \hspace{0.4cm}b=\textbf{svec}(B_{1}B_{1}^{\top})\in \mathbb{R}^{\frac{n(n+1)}{2}},  \notag\\
	A_{w}^{i}&=T_{n}(E\otimes EF_{i}+EF_{i}\otimes E)T_{p}^{\top}\in \mathbb{R}^{{\frac{n(n+1)}{2}} \times\frac{p(p+1)}{2}},
	s_{i}=\textbf{svec}(S_{i})\in \mathbb{R}^{\frac{n(n+1)}{2}}, \notag\\
	B_{w}^{j}&={(V_{j2}\otimes V_{j1})T_{p}^{\top}}\in \mathbb{R}^{v_{j2}v_{j1}\times\frac{p(p+1)}{2}}, ~\textrm{for} ~i=1,\ldots,M, ~j=1,\ldots,N, \notag
\end{align}
and
\begin{align}
s &=\left[
	\begin{array}{ccccc}
		s_{1} &,& \ldots &,& s_{M} \\
	\end{array}
	\right]^{\top}\in\mathbb{R}^{M\frac{n(n+1)}{2}}, \; 	A_{w}=\left[
	\begin{array}{ccccc}
		A_{w}^{1} &,& \ldots &,& A_{w}^{M} \\
	\end{array}
	\right]^{\top}
	\in\mathbb{R}^{M\frac{n(n+1)}{2}\times\frac{p(p+1)}{2}},
	\notag\\
b_{w}& =\left[
\begin{array}{ccccc}
	b &, & \ldots &,&b \\
\end{array}
\right]^{\top}\in\mathbb{R}^{M\frac{n(n+1)}{2}}, \;
	\hspace{0.5cm}B_{w} =\left[
	\begin{array}{ccccc}
		B_{w}^{1} &,&\ldots&, & B_{w}^{N} \\
	\end{array}
	\right]^{\top}\in \mathbb{R}^{\sum_{j=1}^{N}v_{j2}v_{j1}\times\frac{p(p+1)}{2}}.\notag
\end{align}
Combining the properties of the Kronecker product with the equation \eqref{svec8}, we can express the problem \eqref{pro22} in the following vector form:
\begin{eqnarray}\label{ee6}
	\min_{w, s} && r^{\top} w \nonumber \\
	\textrm{s.t.}&	&A_{w}w+s+b_{w}=0, \\
&	&B_{w}w=0, \nonumber \\
&	&w\in \Gamma^{\frac{p(p+1)}{2}}, s\in \mathcal{C}, \nonumber
\end{eqnarray}
where the cone $\Gamma^{\frac{n(n+1)}{2}}\subseteq \mathbb{R}^{\frac{n(n+1)}{2}} $ is defined as
\begin{align}
\Gamma^{\frac{n(n+1)}{2}}&:=\left\{\textbf{svec}(X)\in \mathbb{R}^{\frac{n(n+1)}{2}}|\ X\in \mathbb{S}_{+}^{n}\right\}, \notag
\end{align}
and the convex set $\mathcal{C}\subseteq \mathbb{R}^{M\frac{n(n+1)}{2}}$ is denoted by $\mathcal{C}:=\Gamma^{\frac{n(n+1)}{2}}\times\ldots\times\Gamma^{\frac{n(n+1)}{2}}.$

\renewcommand{\arraystretch}{1}
\section{An accelerated PADMM for solving the CP model}\label{sec3}
In this section, we initially present the PADMM for solving the dual problem of the approximated CP model \eqref{ee6}. Subsequently, we establish the equivalence between the semi-proximal ADMM, including PADMM, and the (partial) PPA. This equivalence enables us to derive an accelerated PADMM by applying the Halpern iteration to the (partial) PPA.
\subsection{PADMM for solving the CP model}
The Lagrangian function of the problem (\ref{ee6}) defined by
\begin{equation*}
	\mathcal{L}(w_s;z,y,v_{\Lambda}):=-\langle z,b_{w}\rangle-\langle z+v,s\rangle+\langle r-\Lambda-A_{w}^{\top}z-B_{w}^{\top}y,w\rangle,
\end{equation*}
where the primal variable $w_s$, and multipliers $z,y$ and $v_{\Lambda}$ of the problem (\ref{ee6}) are defined as    
\begin{align}
z&:=[z_{1}; \ldots; z_M]\in \mathcal{Z}:=\mathbb{R}^{M\frac{n(n+1)}{2}}, \; y:=[y_1; \ldots; y_N]\in \mathcal{Y}:=\mathbb{R}^{\sum_{j=1}^{N}v_{j2}v_{j1}}, \notag\\
v_{\Lambda}&:= [\Lambda; v]\in \mathcal{V}:=\mathbb{R}^{\frac{p(p+1)}{2}}\times\mathcal{Z}, \hspace{0.54cm} w_{s}:= [w; s]\in \mathcal{V}.\notag
\end{align}
The dual of \eqref{ee6}, ignoring the minus sign in the objective, is given by
\begin{equation}
\label{DP0}
 \begin{aligned}
	\underset{z,y,\Lambda,v}{\mathrm{\min}}&\langle z,b_{w}\rangle\\
	~~\textrm{s.t.}~~
&A_{w}^{\top}z+B_{w}^{\top}y+\Lambda=r,\\
& \hspace{.5 cm} z  \hspace{1.15 cm} +  v\hspace{0.05cm}=0,\\
    & \hspace{.5 cm}\Lambda\in\Gamma^{\frac{p(p+1)}{2}}, v\in\mathcal{C}.
\end{aligned}   
\end{equation}
To obtain a more compact form of problem \eqref{DP0}, we introduce the linear operators $\mathcal{A}$ and $\mathcal{B}$ as follows: 
\begin{equation*}
	\mathcal{A}:= \left[\begin{array}{ccccc}
		A_{w}^{1} & I & 0& \cdots& 0 \\
		A_{w}^{2} & 0 &  I & \cdots & 0 \\
		\vdots & \vdots &\vdots  & \ddots  &\vdots\\
		A_{w}^{M} & 0& 0& \cdots& I
	\end{array} \right]= \left[\begin{array}{cc}
		A_{w} & \mathcal{I}
	\end{array}\right],  \quad 
\mathcal{B}:= \left[\begin{array}{cccc}
B_{w}^{1} & 0&  \cdots& 0 \\
\vdots & \vdots & \ddots  &\vdots\\
B_{w}^{N} & 0&  \cdots& 0
\end{array} \right] = \left[\begin{array}{cc}
B_{w} & 0
\end{array}\right].
\end{equation*}
The adjoints of the linear operators $\mathcal{A}$ and $\mathcal{B}$ are denoted as
\begin{align*}
\mathcal{A}^{*} = \left[\begin{array}{c}
	A_{w}^{\top} \\ \mathcal{I}
\end{array}\right]\quad\mathrm{and}\quad\mathcal{B}^{*} =\left[\begin{array}{c}
B_{w}^{\top} \\ 0
\end{array}\right].
\end{align*}
Let $\tilde{b}:= [ b_{w}; 0]\in \mathcal{Z}\times\mathcal{Y}$, $\tilde{r} := [r; 0] \in \mathcal{V},$ and
$\tilde{\mathcal{A}^{\ast}} := \left[
\begin{array}{cc}
\mathcal{A}^* & \mathcal{B}^* \\
\end{array}
\right].$ Then, we rewrite the problem \eqref{DP0} into the following two-block problem:
\begin{align}  \label{DP3}
\underset{\xi,v_{\Lambda}}{\mathrm{\min}} &~\langle \tilde{b}, \xi \rangle + \delta_{\Gamma}(v_{\Lambda}) \nonumber\\
\textrm{s.t.}&~~\tilde{\mathcal{A}}^* \xi + v_{\Lambda} = \tilde{r},
\end{align}
where $\Gamma=\Gamma^{\frac{p(p+1)}{2}}\times \mathcal{C}$ and $\xi = [z ; y]\in \mathcal{Z}\times\mathcal{Y}$. Given $\sigma>0$, for any $(v_{\Lambda},\xi; w_{s}) \in \mathcal{U}:=  \mathcal{V} \times (\mathcal{Z} \times \mathcal{Y}) \times \mathcal{V} $, we can define the following augmented Lagrangian function associated with the problem \eqref{DP3}:
\begin{align}
\mathcal{L}_\sigma (v_{\Lambda},\xi; w_{s}) &:=  \langle \tilde{b}, \xi \rangle + \delta_{\Gamma}(v_{\Lambda}) + \langle w_{s},  \tilde{\mathcal{A}}^* \xi + v_{\Lambda}- \tilde{r} \rangle+\frac{\sigma}{2} \| \tilde{\mathcal{A}}^* \xi + v_{\Lambda}- \tilde{r}\|^2. \notag
\end{align}
Then, the KKT system associated with \eqref{DP3} can be defined as follows:
\begin{equation*}
\left\{
\begin{aligned}
0&= \tilde{b} + \tilde{\mathcal{A}} {w}_{s},\\
0&\in \partial \delta_{\Gamma}({v}_{\Lambda})+ {w}_{s},\\
0&= \tilde{r} - \tilde{\mathcal{A}}^* \xi - {v}_{\Lambda}.
\end{aligned}
\right.
\end{equation*}
Let $\Omega$ be the solution set to the above KKT system. Suppose that  $\Omega\neq\emptyset$. We define the following monotone operator $\mathcal{T}$ by
\begin{equation*}
\mathcal{T}(v_{\Lambda},\xi; w_{s}):=
\left(
  \begin{array}{c}
    \tilde{b} + \tilde{\mathcal{A}}{w}_{s}\\
    \partial \delta_{\Gamma}({v}_{\Lambda})+ {w}_{s}\\
    \tilde{r} - \tilde{\mathcal{A}}^* \xi - {v}_{\Lambda}\\
  \end{array}
\right), \quad \forall (v_{\Lambda},\xi; w_{s}) \in \mathcal{U},
\end{equation*}
where $\partial \delta_{\Gamma}(\cdot)$ denotes the subdifferential of convex function $\delta_{\Gamma}(\cdot)$. It is clear that $\mathcal{T}$ is a maximally monotone operator \cite[Example 20.26 and Corollary 25.5]{BC17} and $\mathcal{T}^{-1}(0) = \Omega \neq \emptyset.$ 

\quad Now, we focus on the design of the algorithm. To enhance computational efficiency, we will select distinct proximal terms based on the problem scale, categorizing them into medium-scale problems ({\bf Case 1}) and large-scale problems ({\bf Case 2}).
\begin{enumerate}
  \item []{\bf Case 1.} Let $\mu_0$ and $\mu_1$ be given positive parameters.  Define
\begin{equation*}
\tilde{\mathcal{S}}_0 := \sigma{\mu_{0} \left[ \begin{array}{cc }
\mathcal{I}_{\mathcal{Z}}  & 0  \\
0  & \mathcal{I}_{\mathcal{Y}}
\end{array}\right]},
\end{equation*}
where $\mathcal{I}_{\mathcal{Z}}$, $\mathcal{I_{\mathcal{Y}}}$ and $\mathcal{I_{\mathcal{V}}}$ denote the identity operators in $\mathcal{Z}$, $\mathcal{Y}$, and $\mathcal{V}$, respectively. Then, a two-block PADMM for solving medium-scale problem \eqref{DP3} is displayed in Algorithm \ref{alg1}:
\begin{algorithm}
\caption{PADMM\_2blk}
\label{alg1}
\begin{algorithmic}[h]
\STATE Set $u^{0} = (v_{\Lambda}^{0}, \xi^{0}, w_{s}^{0}) \in \mathcal{U}$ to be the initial point.\\
$k=0,1,...,$
\begin{align}
				{v}_{\Lambda}^{k+1}&=\underset{ {v}_{\Lambda}\in \mathcal{V} }{\mathrm{argmin}}~\mathcal{L}_\sigma (v_{\Lambda},{\xi}^{k}; {w}_{s}^{k})  + \frac{1}{2} \|{v}_{\Lambda} - v^k_{\Lambda} \|^2_{\mu_1 \mathcal{I}_\mathcal{V}}\notag\\
				{w}_{s}^{k+1}&= w_{s}^k + \sigma (\tilde{\mathcal{A}}^*{\xi}^{k} + {v}_{\Lambda}^{k+1}  -\tilde{r} )\notag\\
				\displaystyle{\xi}^{k+1}&=  \underset{ \xi\in \mathcal{Z}\times\mathcal{Y} }{\mathrm{argmin}}~\mathcal{L}_\sigma ({v}_{\Lambda}^{k+1},\xi;{w}_{s}^{k+1}) + \frac{1}{2} \|{\xi}- \xi^{k} \|^2_{{\tilde{\mathcal{S}}}_0}\notag
\end{align}
\end{algorithmic}
\end{algorithm}
  \item []{\bf Case 2.} Let $\mu_2$ and $\mu_3$ be given positive parameters. Define 
 \begin{eqnarray*}	
\mathcal{Q} & := & \sigma \tilde{\mathcal{A}} \tilde{\mathcal{A}}^* + \tilde{\mathcal{S}}'_0  =
 \sigma \left[\begin{array}{cc}\mathcal{A} \mathcal{A}^* + \mu_2 \mathcal{I}_{\mathcal{Z}} &  \mathcal{A}\mathcal{B}^* \\ \mathcal{B}\mathcal{A}^*  & \mathcal{B}\mathcal{B}^* + \mu_3 \mathcal{I}_{\mathcal{Y}} \end{array} \right]= \mathcal{Q}_u + \mathcal{Q}_d + \mathcal{Q}^*_u, 
	\end{eqnarray*}
where
\begin{equation*}
\tilde{\mathcal{S}}'_0  :=\sigma\left[ \begin{array}{cc }
\mu_2	\mathcal{I}_{\mathcal{Z}}  & 0  \\
0  & \mu_3 \mathcal{I}_{\mathcal{Y}}
\end{array}\right],\quad \mathcal{Q}_u :=  \sigma \left[\begin{array}{cc}0 &\mathcal{A}\mathcal{B}^* \\0 & 0\end{array}\right], \mathrm{and}~ \mathcal{Q}_d:=  \sigma \left[\begin{array}{cc} \mathcal{A} \mathcal{A}^* + \mu_2  \mathcal{I}_{\mathcal{Z}} & 0 \\0 & \mathcal{B}\mathcal{B}^* + \mu_3 \mathcal{I}_{\mathcal{Y}}\end{array}\right].
\end{equation*} 
Note that $\mathcal{Q}_d$ is positive definite. From \cite{Lischur,Lisgs}, the self-adjoint positive semi-definite linear operator $\textrm{sGS}(\mathcal{Q}): \mathcal{Z}\times\mathcal{Y} \rightarrow \mathcal{Z}\times\mathcal{Y}$ can be defined as
\begin{align}\label{sGSop}
	\textrm{sGS}(\mathcal{Q}):= \mathcal{Q}_u \mathcal{Q}_d^{-1}\mathcal{Q}_u^*.
\end{align}	
For large-scale problems, addressing the linear system involving $\xi$ poses a significant challenge. Utilizing the $\textrm{sGS}(\mathcal{Q})$ operator enables the separation of $\xi$ into $z$ and $y$. Consequently, problem \eqref{DP3} can be solved by the following multi-block PADMM provided in Algorithm \ref{alg2}:
\begin{algorithm}[H]
\caption{PADMM\_sGS}
\label{alg2}
\begin{algorithmic}
\STATE Set $u^{0} = (v_{\Lambda}^{0}, (z^{0}, y^{0}),  w_{s}^{0})\in \mathcal{U}$ to be the initial point.\\
$k=0,1,...,$
			\begin{eqnarray}
		{v}_{\Lambda}^{k+1} &=& \underset{{v}_{\Lambda}\in \mathcal{V} }{\mathrm{argmin}}~\mathcal{L}_\sigma (v_{\Lambda}, (z^k, y^k); {w}_{s}^{k})  + \frac{1}{2} \|{v}_{\Lambda} - v^k_{\Lambda} \|^2_{\mu_1 \mathcal{I}_\mathcal{V}}\notag\\
 	{w}_{s}^{k+1}&=& w_{s}^k + \sigma (\tilde{\mathcal{A}}^*[{z}^{k};y^k] + {v}_{\Lambda}^{k+1}  -\tilde{r} )\notag\\
	\displaystyle {y}^{k+\frac{1}{2}}&=& \underset{y\in \mathcal{Y} }{\mathrm{argmin}}~\mathcal{L}_\sigma ( {v}_{\Lambda}^{k+1},(z^k,y);{w}_{s}^{k+1}) + \frac{1}{2} \|y- y^{k} \|^2_{\sigma\mu_3 \mathcal{I}_\mathcal{Y}}\label{sGSPADMMa}\\	
 {z}^{k+1}&=& \underset{z\in \mathcal{Z} }{\mathrm{argmin}} ~\mathcal{L}_\sigma  ({v}_{\Lambda}^{k+1}, (z, y^{k+\frac{1}{2}});{w}_{s}^{k+1}) + \frac{1}{2} \|{z}- z^{k} \|^2_{\sigma\mu_2 \mathcal{I}_\mathcal{Z}}\label{sGSPADMMb}\\	
		\displaystyle{y}^{k+1}&=& \underset{y\in \mathcal{Y} }{\mathrm{argmin}}~ \mathcal{L}_\sigma ({v}_{\Lambda}^{k+1}, (z^{k+1}, y);{w}_{s}^{k+1}) + \frac{1}{2} \|{y}- y^{k} \|^2_{\sigma\mu_3 \mathcal{I}_\mathcal{Y}}\label{sGSPADMMc}
\end{eqnarray}
\end{algorithmic}
\end{algorithm}
\end{enumerate} To simplify the notation in the subsequent discussion, we introduce the following proximal operators
\begin{equation}\label{opS}
	\mathcal{S}_1 = {\mu_{1} \mathcal{I}_{\mathcal{V}}},\quad \text{ and }\quad \mathcal{S}_2:=
	\left\{
	\begin{aligned}
		&\tilde{\mathcal{S}}_0,                              \hspace{1.96cm} \textrm{for PADMM\_2blk,}\\
		&\textrm{sGS}({\mathcal{Q}})+ \tilde{\mathcal{S}}'_0, \quad \textrm{for PADMM\_sGS}.
	\end{aligned}
	\right.
\end{equation} 
This allows us to consolidate Algorithm \ref{alg1} and Algorithm \ref{alg2} into the unified PADMM framework presented in Algorithm \ref{alg3}:
\begin{algorithm}[h]
\caption{PADMM}
\label{alg3}
\begin{algorithmic}[]
		\STATE Set $u^{0}=(v_{\Lambda}^{0},\xi^{0},w_{s}^{0})\in \mathcal{U}$ to be the initial  point.\\
$k=0,1,...,$
		\begin{align}
			{v}_{\Lambda}^{k+1}&=\underset{ {v}_{\Lambda}\in \mathcal{V} }{\mathrm{argmin}}~\mathcal{L}_\sigma (v_{\Lambda},{\xi}^{k}; {w}_{s}^{k})  + \frac{1}{2} \|{v}_{\Lambda} - v^k_{\Lambda} \|^2_{\mathcal{S}_1}\label{PADMMa}\\
			{w}_{s}^{k+1}&= w_{s}^k + \sigma (\tilde{\mathcal{A}}^*{\xi}^{k} + {v}_{\Lambda}^{k+1}  -\tilde{r} )\notag\\
			\displaystyle{\xi}^{k+1}&=\underset{ \xi\in \mathcal{Z}\times\mathcal{Y} }{\mathrm{argmin}}~\mathcal{L}_\sigma ({v}_{\Lambda}^{k+1},\xi;{w}_{s}^{k+1}) + \frac{1}{2} \|{\xi}- \xi^{k} \|^2_{\mathcal{S}_2}\label{PADMMc}
		\end{align}
\end{algorithmic}
\end{algorithm}\\

\begin{remark}
The selection of ${\mathcal{S}}_2$ depends primarily on the problem dimensions and computer RAM.  For example, in our numerical experiments, the dual problem \eqref{DP3} and the primal problem \eqref{ee6} generated from $n=300, m=2,  M=4$ in the ODC problem \eqref{Pro5} can be treated as a large-scale problem, for which the proximal operator $\textrm{sGS}({\mathcal{Q}})+ \tilde{\mathcal{S}}'_0$ is used in the PADMM. Then, the large-scale subproblems involving this proximal operator can be efficiently solved by leveraging the sGS decomposition technique developed in \cite{Lischur, Lisgs}.
\end{remark}

\subsection{Acceleration of the PADMM scheme}
Define a self-adjoint linear operator $\mathcal{S}: \mathcal{U}\to \mathcal{U}$ in the following form:
\[
\mathcal{S} := \left(
\begin{array}{ccc}
	\mathcal{S}_1  &  & \\
	&\sigma \tilde{\mathcal{A}}\tilde{\mathcal{A}}^* + \mathcal{S}_2 & \tilde{\mathcal{A}} \\
	& \tilde{\mathcal{A}}^* &  \sigma^{-1} \mathcal{I}_{\mathcal{V}} \\
\end{array}
\right).
\]
Since $\mathcal{S}_1$ and $\mathcal{S}_2$ are two self-adjoint positive definite linear operators, according to the equivalence between the semi-proximal ADMM and the (partial) proximal point method presented in Proposition \ref{prop-B} in Appendix \ref{appxb}, we can obtain the following equivalence:
\begin{prop}\label{prop-B1}
 Consider $(v_\Lambda^0,\xi^0,w_s^0) \in   \mathcal{U}$ and $\rho\in \mathbb{R}$. Then the point $(v_\Lambda^+, \xi^+, w_s^+)$ generated by the following generalized PADMM (GPADMM) scheme
	\begin{equation}
		\label{padmm}
		\left\{
		\begin{aligned}
			\bar{v}_\Lambda^{+} ={}&\underset{{v}_{\Lambda}\in \mathcal{V} }{\mathrm{argmin}}~\mathcal{L}_\sigma (v_\Lambda, \xi^0; w_s^0) + \frac{1}{2}\norm{v_\Lambda - v_\Lambda^0}_{\mathcal{S}_1}^2, \\[5pt]
			\bar{w}_s^{+} ={}& w_s^0 + \sigma( \bar{v}_\Lambda^{+} + \tilde{\mathcal{A}}^* \xi^0 - \tilde{r}),\\[5pt]
			\bar{\xi}^{+} ={}&\underset{ \xi\in \mathcal{Z}\times\mathcal{Y} }{\mathrm{argmin}}~\mathcal{L}_{\sigma} (\bar{v}_\Lambda^{+}, \xi; \bar{w}_s^{+} ) + \frac{1}{2}\norm{\xi-  \xi^0}_{\mathcal{S}_2}^2, \\[5pt]
			&\hspace{-1.0cm}(v_\Lambda^+, \xi^+, w_s^+) = (1-\rho)(v_\Lambda^0, \xi^0, w_s^0)
			+ \rho (\bar{v}_\Lambda^+, \bar{\xi}^+, \bar{w}_s^+)
		\end{aligned}
		\right.
	\end{equation}
	is equivalent to the one generated by the following proximal point scheme
	\begin{equation}\label{ppa-r}
		\left\{\begin{aligned}
			&0\in \mathcal{T} (\bar{v}_\Lambda^+, \bar{\xi}^+, \bar{w}_s^+) + \mathcal{S}(\bar{v}_\Lambda^+ - {v}_\Lambda^0, \bar{\xi}^{+} - {\xi}^0,\bar{w}_s^+ - {w}_s^0),
			\\[5pt]
			&(v_\Lambda^+, \xi^+, w_s^+) = (1-\rho)(v_\Lambda^0, \xi^0, w_s^0) + \rho (\bar{v}_\Lambda^+, \bar{\xi}^+, \bar{w}_s^+).
		\end{aligned}
		\right.
	\end{equation}
\end{prop}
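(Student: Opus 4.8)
The plan is to establish the equivalence by direct computation: write out the optimality conditions of the three subproblems in \eqref{padmm}, show they can be reorganized into the single inclusion appearing in \eqref{ppa-r}, and then observe that the over-relaxation step is identical in both schemes so nothing more is needed there. Since the excerpt says this follows from a general Proposition~\ref{prop-B} in Appendix~\ref{appxb} on the equivalence between semi-proximal ADMM and the (partial) PPA, the real content of proving Proposition~\ref{prop-B1} is to verify that the two-block problem \eqref{DP3} together with the proximal operators $\mathcal{S}_1,\mathcal{S}_2$ and the operator $\mathcal{T}$ fit the template of that general result, with $\mathcal{S}$ being exactly the induced metric. So the first step is to recall the statement of Proposition~\ref{prop-B} and match notation: the block ordering $(v_\Lambda,\xi;w_s)$, the linear constraint $\tilde{\mathcal{A}}^*\xi + v_\Lambda = \tilde r$, the objective split $\langle \tilde b,\xi\rangle + \delta_\Gamma(v_\Lambda)$, and the multiplier $w_s$.

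Concretely, the key steps in order: (i) From the first line of \eqref{padmm}, the first-order condition is $0 \in \partial\delta_\Gamma(\bar v_\Lambda^+) + w_s^0 + \sigma(\bar v_\Lambda^+ + \tilde{\mathcal{A}}^*\xi^0 - \tilde r) + \mathcal{S}_1(\bar v_\Lambda^+ - v_\Lambda^0)$; substituting the definition of $\bar w_s^+$ from the second line, this becomes $0 \in \partial\delta_\Gamma(\bar v_\Lambda^+) + \bar w_s^+ + \mathcal{S}_1(\bar v_\Lambda^+ - v_\Lambda^0)$, which is exactly the second component of $\mathcal{T}(\bar v_\Lambda^+,\bar\xi^+,\bar w_s^+) + \mathcal{S}(\cdots)$ once one checks that the $\mathcal{S}$-coupling on this row is only $\mathcal{S}_1$. (ii) From the third line, the condition is $0 = \tilde b + \tilde{\mathcal{A}}\big(\bar w_s^+ + \sigma(\tilde{\mathcal{A}}^*(\bar\xi^+ - \xi^0))\big) + \mathcal{S}_2(\bar\xi^+ - \xi^0)$; rearranged, $0 = \tilde b + \tilde{\mathcal{A}}\bar w_s^+ + (\sigma\tilde{\mathcal{A}}\tilde{\mathcal{A}}^* + \mathcal{S}_2)(\bar\xi^+ - \xi^0) + \tilde{\mathcal{A}}(\bar w_s^+ - w_s^0 - \sigma\tilde{\mathcal{A}}^*(\bar\xi^+-\xi^0))$—here I would be careful and instead expand $\bar w_s^+ = w_s^0 + \sigma(\bar v_\Lambda^+ + \tilde{\mathcal{A}}^*\xi^0 - \tilde r)$ to see that the cross-term produces exactly the $\tilde{\mathcal{A}}(\bar w_s^+ - w_s^0)$ entry in the matrix $\mathcal{S}$, matching the first component of $\mathcal{T} + \mathcal{S}(\cdots)$. (iii) The third component of the inclusion, $0 = \tilde r - \tilde{\mathcal{A}}^*\bar\xi^+ - \bar v_\Lambda^+ + \sigma^{-1}(\bar w_s^+ - w_s^0)$, follows immediately by solving the second line of \eqref{padmm} for the residual: $\sigma^{-1}(\bar w_s^+ - w_s^0) = \bar v_\Lambda^+ + \tilde{\mathcal{A}}^*\xi^0 - \tilde r$, and then using that $\tilde{\mathcal{A}}^*\xi^0 = \tilde{\mathcal{A}}^*\bar\xi^+ - \tilde{\mathcal{A}}^*(\bar\xi^+ - \xi^0)$ reconciles it with the $\tilde{\mathcal{A}}^*$ off-diagonal block—again this is where the precise off-diagonal entries of $\mathcal{S}$ earn their keep. (iv) Finally, since the last line of \eqref{padmm} and of \eqref{ppa-r} are verbatim the same relaxation, the equivalence of the full iterates is immediate once the inclusion is matched.

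**The main obstacle** I expect is purely bookkeeping: getting the off-diagonal blocks $\tilde{\mathcal{A}}$ and $\tilde{\mathcal{A}}^*$ in $\mathcal{S}$ to line up with the cross-terms generated by substituting the $w_s$-update into the $v_\Lambda$- and $\xi$-subproblem optimality conditions, and confirming that the Gauss--Seidel nature of the ADMM sweep (first $v_\Lambda$ at $\xi^0$, then $\xi$ at the updated $\bar w_s^+$) is exactly what makes the resulting system lower-triangular-plus-its-transpose in the way $\mathcal{S}$ encodes. One must also check positive definiteness of $\mathcal{S}$ is not actually needed for the \emph{equivalence} (only for later convergence/nonexpansiveness claims), so the hypothesis that $\mathcal{S}_1,\mathcal{S}_2 \succ 0$ can be used merely to guarantee the subproblems have unique solutions and $\mathcal{S}$ is invertible, making the proximal step \eqref{ppa-r} well-defined. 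If Proposition~\ref{prop-B} is stated in sufficient generality, most of (i)--(iv) reduces to citing it with the above identifications, and the only thing to write in full is the verification that $\sigma\tilde{\mathcal{A}}\tilde{\mathcal{A}}^* + \mathcal{S}_2$ and the $\tilde{\mathcal{A}},\tilde{\mathcal{A}}^*,\sigma^{-1}\mathcal{I}_{\mathcal{V}}$ blocks assemble into the operator $\mathcal{S}$ as displayed.
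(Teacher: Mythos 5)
Your plan is correct and follows essentially the same route as the paper: Proposition \ref{prop-B1} is obtained there by specializing the general equivalence of Proposition \ref{prop-B} in Appendix \ref{appxb}, whose proof is exactly your steps (i)--(iv) — write the subproblem optimality conditions, substitute the multiplier update $\bar{w}_s^{+} = w_s^0 + \sigma(\bar{v}_\Lambda^{+} + \tilde{\mathcal{A}}^*\xi^0 - \tilde{r})$, and reassemble the three relations into $0\in\mathcal{T}(\bar{v}_\Lambda^+,\bar{\xi}^+,\bar{w}_s^+) + \mathcal{S}(\bar{v}_\Lambda^+-v_\Lambda^0,\bar{\xi}^+-\xi^0,\bar{w}_s^+-w_s^0)$, the relaxation step being identical on both sides. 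The one caution is that the first display in your step (ii) is not yet the correct optimality condition (it omits the $\tilde{\mathcal{A}}(\bar{w}_s^{+}-w_s^0)$ contribution coming from $\sigma(\tilde{\mathcal{A}}^*\bar{\xi}^++\bar{v}_\Lambda^+-\tilde{r})=\sigma\tilde{\mathcal{A}}^*(\bar{\xi}^+-\xi^0)+(\bar{w}_s^{+}-w_s^0)$), but the expansion of $\bar{w}_s^{+}$ that you yourself prescribe yields the correct row $\tilde{b}+\tilde{\mathcal{A}}\bar{w}_s^{+}+(\sigma\tilde{\mathcal{A}}\tilde{\mathcal{A}}^*+\mathcal{S}_2)(\bar{\xi}^+-\xi^0)+\tilde{\mathcal{A}}(\bar{w}_s^{+}-w_s^0)$, so the argument goes through.
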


\quad To achieve a faster convergence rate, based on the above Proposition \ref{prop-B1}, we shall employ the Halpern fixed-point method to accelerate the proximal point scheme \eqref{ppa-r}. Let $\rho$ be a given relaxation parameter in $(0,2]$ and $u := (v_\Lambda, \xi, w_s)\in \mathcal{U}$. Then
each iteration in the relaxed PPA \eqref{ppa-r} can be written in an abstract form: 
\[
u^+ = F_\rho (u),
\]
where the operator $F_\rho:\cal U \to \cal U$ is defined by
\begin{align}\label{relop}
	{F_\rho}({u})&:= (1-\rho)u +\rho  {(\mathcal{S}+ \mathcal{T})^{-1}} \,  \mathcal{S}u  =u- 2 \cdot \frac{\rho}{2} \cdot[{\textcolor{red}{}}\mathcal{I} - (\mathcal{S}+ \mathcal{T})^{-1}\mathcal{S}] {u}.
\end{align}
Note that
\[
u\in {\rm Fix}(F_\rho) \; \Leftrightarrow \; 
u = ({\cal S} + {\cal T})^{-1}{\cal S}u \;  \Leftrightarrow  \;  0\in {\cal T}(u).
\]
Hence \[
{{\textrm{Fix}(F_\rho) = {\cal T}^{-1}(0)}}.
\]
For notational convenience, we drop the dependence of $F$ on $\rho$ in the rest of this section.

\quad Since $\cal S$ is a symmetric positive definite linear operator, we know that ${\cal S}^{-1}$ can be decomposed as ${\cal S}^{-1} = LL^T$ with $L$ being a real lower triangular matrix with positive diagonal entries.
Next, we shall show that $F$ is in fact non-expansive with respect to the induced norm $\|\cdot\|_{\cal S}$.
For this purpose, we define the normalized operator of $F$ by $\widetilde F:\cal U\to \cal U$ in the following way:
\begin{align}\label{equa52}
	\widetilde{F}(\tilde{u})&:=  \displaystyle (1-\rho)\tilde{u} +\rho \left( \mathcal{I}  +  L^T \mathcal{T} L\right)^{-1} \tilde{u} =\tilde{u}- 2 \cdot \frac{\rho}{2} \cdot \left[ \mathcal{I}-( \mathcal{I}+\widetilde{\mathcal{T}})^{-1} \right]\tilde{u},
\end{align}
where $\widetilde{\mathcal{T}} = L^T \mathcal{T} L$ is a maximally monotone operator due to \cite[Proposition 23.25]{BC17}.
Similar to \cite[Lemma 2.1]{Lisnipal},
it is not difficult to verify that
\begin{equation}\label{eq:tildeF}
L^{-1}F(u)=\widetilde{F}(L^{-1}u), \quad \forall \, u\in {\cal U}.
\end{equation}

\begin{prop}\label{prop:non-ex}
Given the parameter $\rho\in(0,2]$, the operator $\widetilde{F}$ defined in \eqref{equa52} is non-expansive. Moreover, $F$ is non-expansive with respect to the induced norm $\|\cdot\|_{\cal S}$, i.e.,
\begin{equation*}
	\| F(u) - F(v)\|_{\cal S} \le \|u -v\|_{\cal S}, \quad \forall\, u, v\in {\cal U}.
\end{equation*}
\end{prop}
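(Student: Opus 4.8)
The plan is to reduce the non-expansiveness of $F$ with respect to $\|\cdot\|_{\mathcal S}$ to the non-expansiveness of the normalized operator $\widetilde F$ with respect to the standard Euclidean norm, using the change of variables $u \mapsto L^{-1}u$ encoded in \eqref{eq:tildeF}. Concretely, for any $u,v\in\mathcal U$ we have $\|F(u)-F(v)\|_{\mathcal S}^2 = \|L^{-1}(F(u)-F(v))\|^2 = \|\widetilde F(L^{-1}u) - \widetilde F(L^{-1}v)\|^2$, so it suffices to show $\widetilde F$ is $1$-Lipschitz in the Euclidean norm; then choosing $\tilde u = L^{-1}u$, $\tilde v = L^{-1}v$ and using $\|L^{-1}u - L^{-1}v\|^2 = \|u-v\|_{\mathcal S}^2$ gives the claim. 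So the real work is the first sentence of the proposition.

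For $\widetilde F$, I would recall that $\widetilde{\mathcal T} = L^T \mathcal T L$ is maximally monotone (already noted in the excerpt via \cite[Proposition 23.25]{BC17}), hence its resolvent $J_{\widetilde{\mathcal T}} := (\mathcal I + \widetilde{\mathcal T})^{-1}$ is firmly non-expansive, and the associated reflected resolvent $R := 2 J_{\widetilde{\mathcal T}} - \mathcal I$ is non-expansive (standard Minty/Moreau facts, e.g.\ \cite[Proposition 4.4]{BC17}). From \eqref{equa52}, $\widetilde F(\tilde u) = \tilde u - \rho(\mathcal I - J_{\widetilde{\mathcal T}})\tilde u$; rewriting $\mathcal I - J_{\widetilde{\mathcal T}} = \tfrac12(\mathcal I - R)$ gives $\widetilde F = \mathcal I - \tfrac{\rho}{2}(\mathcal I - R) = (1 - \tfrac{\rho}{2})\mathcal I + \tfrac{\rho}{2} R$. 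Since $\rho\in(0,2]$, the coefficient $\tfrac{\rho}{2}\in(0,1]$, so $\widetilde F$ is a convex combination of the identity (non-expansive) and $R$ (non-expansive), and is therefore non-expansive by the triangle inequality: $\|\widetilde F(\tilde u) - \widetilde F(\tilde v)\| \le (1-\tfrac{\rho}{2})\|\tilde u - \tilde v\| + \tfrac{\rho}{2}\|R\tilde u - R\tilde v\| \le \|\tilde u - \tilde v\|$.

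I would then assemble the two pieces: state that $\widetilde F$ is non-expansive in $\|\cdot\|$ by the averaging argument above, then transport this through $L$ via \eqref{eq:tildeF} and the identity $\|L^{-1}x\|^2 = \langle x, \mathcal S x\rangle = \|x\|_{\mathcal S}^2$ (which follows from $\mathcal S^{-1} = LL^T$, hence $\mathcal S = L^{-T}L^{-1}$) to conclude $\|F(u)-F(v)\|_{\mathcal S} \le \|u-v\|_{\mathcal S}$. One point that deserves a line of care is that $\mathcal S$ is genuinely positive definite (not merely semidefinite) so that $L$ is invertible and $\|\cdot\|_{\mathcal S}$ is a bona fide norm; this is where the hypothesis that $\mathcal S_1$ and $\mathcal S_2$ are positive definite, together with the Schur-complement structure of $\mathcal S$ (the block $\sigma^{-1}\mathcal I_{\mathcal V}\succ 0$ and its Schur complement $\sigma\tilde{\mathcal A}\tilde{\mathcal A}^* + \mathcal S_2 - \sigma\tilde{\mathcal A}\tilde{\mathcal A}^* = \mathcal S_2 \succ 0$), is used. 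The main obstacle, such as it is, is not any deep estimate but rather getting the bookkeeping of the change of variables exactly right — in particular verifying \eqref{eq:tildeF} in the form stated and confirming that the resolvent identity $(\mathcal S+\mathcal T)^{-1}\mathcal S u$ corresponds under $u = L\tilde u$ to $(\mathcal I + \widetilde{\mathcal T})^{-1}\tilde u$, which is exactly the content needed to pass between \eqref{relop} and \eqref{equa52}.
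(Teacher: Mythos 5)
Your proposal is correct and follows essentially the same route as the paper: both establish non-expansiveness of $\widetilde F$ from the firm non-expansiveness of the resolvent $(\mathcal I + \widetilde{\mathcal T})^{-1}$ of the maximally monotone $\widetilde{\mathcal T}=L^T\mathcal T L$, and then transport the Euclidean estimate through $L$ via \eqref{eq:tildeF} and $\|L^{-1}x\|=\|x\|_{\mathcal S}$. The only cosmetic difference is in the middle step: you write $\widetilde F=(1-\tfrac{\rho}{2})\mathcal I+\tfrac{\rho}{2}\bigl(2(\mathcal I+\widetilde{\mathcal T})^{-1}-\mathcal I\bigr)$ as a convex combination of the identity and the non-expansive reflected resolvent, whereas the paper shows $\mathcal I-(\mathcal I+\widetilde{\mathcal T})^{-1}$ is $\tfrac{\rho}{2}$-cocoercive and invokes \cite[Proposition 4.11]{BC17} --- two equivalent one-line consequences of the same fact.
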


\begin{proof}
Define the resolvent operator of $ \widetilde{\mathcal{T}}$ as $\widetilde{G} :=( \mathcal{I}+\widetilde{\mathcal{T}})^{-1}$ and $\widetilde{\mathcal{Q}}:= \mathcal{I}-\widetilde{G}.$ It should be noted that $\widetilde{G}$ is a single-valued and firmly non-expansive operator.
By \cite[Proposition 4.4]{BC17} and $\rho\in(0,2],$ we have
\begin{align}
\langle \widetilde{\mathcal{Q}}x- \widetilde{\mathcal{Q}}y , x- y \rangle&=\langle \widetilde{\mathcal{Q}} x - \widetilde{\mathcal{Q}}y  ,  \widetilde{G}x - \widetilde{G} y\rangle +\|  \widetilde{\mathcal{Q}}x - \widetilde{\mathcal{Q}}y \|^2\notag\\
&\geq\|\widetilde{\mathcal{Q}} x -\widetilde{\mathcal{Q}}y \|^2\geq\frac{\rho}{2}\|\widetilde{\mathcal{Q}} x-\widetilde{\mathcal{Q}}y \|^2\notag,
\end{align}
which  shows that $\widetilde{\mathcal{Q}}$ is $\frac{\rho}{2} $-co-coercive. Then,  according to \cite[Proposition 4.11]{BC17}, we know that the mapping $\widetilde{F}:=  \mathcal{I}-2(\frac{\rho}{2})\widetilde{\mathcal{Q}}$ is a non-expansive operator.

\quad Now for all $u,v\in{\cal U}$, it holds from \eqref{eq:tildeF} that
\begin{align*}
\|F(u) - F(v)\|_{\cal S} = {}&\|L^{-1} F(u) - L^{-1} F(v)\| \\
= {}&\| \widetilde{F}(L^{-1}u) - \widetilde{F}(L^{-1}v)\| \\
\le {}&\|L^{-1} u - L^{-1} v\| = \|u - v\|_{\cal S},
\end{align*}
where the last inequality follows from the non-expansiveness of $\widetilde F$.
\end{proof}

\quad Proposition \ref{prop:non-ex} implies that the following Halpern fixed-point iteration can be used to find a fixed-point in ${\rm Fix}(F)$:
\begin{align}\label{hal}
{u}^{k+1}=\frac{1}{k+2}{u}^{0} +\left(1-\frac{1}{k+2}\right)F(u^k), \quad \forall \, k\ge 0.
\end{align}
Then, the definition of $F$ in \eqref{relop}, together with Proposition \ref{prop-B1}, implies that the update scheme \eqref{hal} can be equivalently recast as the following accelerated PADMM algorithm.

\begin{algorithm}[H]
\caption{the accelerated proximal ADMM (APADMM)}\label{alg4}
\begin{algorithmic}
\STATE Choose parameters $\sigma>0,$  $\mu_0>0$, $\mu_1>0$, $\mu_{2}>0,$ $\mu_3>0$, and $\rho\in(0,2]$. Let operators $\mathcal{S}_1$ and $ \mathcal{S}_2$ be defined in \eqref{opS}. Select an initial point $u^{0}=(v_{\Lambda}^{0},\xi^{0},w_{s}^{0})\in\mathcal{U}.$\\
$k=0,1,...,$
\begin{align}
&\bar{v}_{\Lambda}^{k}=\underset{{v}_{\Lambda}\in \mathcal{V} }{\mathrm{argmin}}~\mathcal{L}_\sigma (v_{\Lambda},{\xi}^{k}; {w}_{s}^{k})  + \frac{1}{2} \|{v}_{\Lambda} - v^k_{\Lambda} \|^2_{\mathcal{S}_1}\label{ProPADMM}\\
&\bar{w}_{s}^{k}= w_{s}^k + \sigma (\tilde{\mathcal{A}}^*{\xi}^{k} + \bar{v}_{\Lambda}^{k}-\tilde{r} )\notag\\
&\displaystyle\bar{\xi}^{k}=\underset{\xi\in \mathcal{Z}\times\mathcal{Y} }{\mathrm{argmin}}~\mathcal{L}_\sigma (\bar{v}_{\Lambda}^{k},\xi;\bar{w}_{s}^{k}) + \frac{1}{2} \|{\xi}  - \xi^{k} \|_{\mathcal{S}_2}^{2}\label{PADMMb}\\
&\hat{u}^{k+1}=\rho\bar{u}^{k}+(1-\rho){u}^{k}\notag\\
&{u}^{k+1}=\frac{1}{k+2}{u}^{0}+\frac{k+1}{k+2}\hat{u}^{k+1}\notag
\end{align}
\end{algorithmic}
\end{algorithm}

\begin{remark}
Algorithm \ref{alg4} incorporates both the relaxation and the acceleration techniques. We shall remark that the relaxation parameter $\rho$ can be chosen
in $(0,2]$. This is the key feature that distinguishes it from the classic generalized ADMM \cite{DRA92} where $\rho$ is restricted in $(0,2)$. Furthermore, to improve the performance of {Algorithm} \ref{alg4}, in our implementation, the restarting strategy studied in \cite[Section 11.4]{Nemirov} and \cite[Section 5.1]{Neste} is also employed.
\end{remark}

Now we are ready to state the convergence and rate of convergence for  $\textrm{Algorithm}$ \ref{alg4}.
\begin{thm}{\cite[Theorem 2]{Wittmann}} \label{lem:wit}
Assume that ${\cal T}^{-1}(0)\neq \emptyset$. Let $\{u^k\}_{k=0}^{\infty}$ be the infinite sequence generated by Algorithm \ref{alg4}. Then, it holds that
$\lim_{k\to \infty}u^k = \Pi_{{\cal T}^{-1}(0)}(u^0)$.
\end{thm}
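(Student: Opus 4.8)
The plan is to deduce Theorem~\ref{lem:wit} directly from Wittmann's classical convergence theorem for the Halpern iteration of a nonexpansive map, so the real work is to verify that Algorithm~\ref{alg4} is literally an instance of that iteration in the right Hilbert-space structure. First I would recall that the space $\mathcal U = \mathcal V \times (\mathcal Z \times \mathcal Y) \times \mathcal V$, equipped with the inner product $\langle \cdot, \cdot\rangle_{\mathcal S} := \langle \mathcal S \cdot, \cdot\rangle$, is a genuine (finite-dimensional, hence complete) Hilbert space, because $\mathcal S$ is self-adjoint positive definite — this uses that $\mathcal S_1 = \mu_1 \mathcal I_{\mathcal V}$ and $\mathcal S_2$ (either $\tilde{\mathcal S}_0$ or $\mathrm{sGS}(\mathcal Q) + \tilde{\mathcal S}'_0$) are positive definite, and that the $2\times 2$ block $\bigl[\begin{smallmatrix} \sigma \tilde{\mathcal A}\tilde{\mathcal A}^* + \mathcal S_2 & \tilde{\mathcal A} \\ \tilde{\mathcal A}^* & \sigma^{-1}\mathcal I_{\mathcal V}\end{smallmatrix}\bigr]$ is positive definite by a Schur-complement argument (its Schur complement with respect to the lower-right block is $\sigma\tilde{\mathcal A}\tilde{\mathcal A}^* + \mathcal S_2 - \sigma\tilde{\mathcal A}\tilde{\mathcal A}^* = \mathcal S_2 \succ 0$).

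Next I would identify the iteration. By Proposition~\ref{prop-B1}, one GPADMM step $u \mapsto (v_\Lambda^+,\xi^+,w_s^+)$ coincides with $u \mapsto F(u) = (1-\rho)u + \rho(\mathcal S + \mathcal T)^{-1}\mathcal S u$ from \eqref{relop}; the three argmin-updates together with the relaxation step $\hat u^{k+1} = \rho\bar u^k + (1-\rho)u^k$ in Algorithm~\ref{alg4} are exactly $\hat u^{k+1} = F(u^k)$. Hence the final line ${u}^{k+1} = \frac{1}{k+2}u^0 + \frac{k+1}{k+2}F(u^k)$ is precisely the Halpern iteration \eqref{hal} with stepsizes $\lambda_k = 1/(k+2)$, which satisfy the standard conditions $\lambda_k \to 0$, $\sum_k \lambda_k = \infty$, and $\sum_k |\lambda_{k+1}-\lambda_k| < \infty$ (equivalently $\lambda_k/\lambda_{k+1} \to 1$). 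By Proposition~\ref{prop:non-ex}, $F$ is nonexpansive with respect to $\|\cdot\|_{\mathcal S}$, and by the computation following \eqref{relop}, $\mathrm{Fix}(F) = \mathcal T^{-1}(0) \neq \emptyset$ by hypothesis. Therefore Wittmann's theorem \cite{Wittmann} applies in the Hilbert space $(\mathcal U, \langle\cdot,\cdot\rangle_{\mathcal S})$ and yields $u^k \to \Pi^{\mathcal S}_{\mathcal T^{-1}(0)}(u^0)$, the $\mathcal S$-orthogonal projection of $u^0$ onto $\mathrm{Fix}(F)$.

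The one point needing care — and the main obstacle — is the projection in the conclusion: Theorem~\ref{lem:wit} states the limit is $\Pi_{\mathcal T^{-1}(0)}(u^0)$, which a priori means the \emph{Euclidean} projection, whereas Wittmann's theorem naturally delivers the projection in the $\mathcal S$-norm. I would resolve this either by noting that the statement is to be read with $\Pi$ denoting the projection in the ambient Hilbert structure being used (i.e.\ $\|\cdot\|_{\mathcal S}$), or, if the Euclidean projection is genuinely intended, by transporting everything through the factorization $\mathcal S^{-1} = LL^T$: set $\tilde u^k := L^{-1}u^k$, use \eqref{eq:tildeF} to see that $\{\tilde u^k\}$ is the Halpern iteration of the $\|\cdot\|$-nonexpansive map $\widetilde F$ from \eqref{equa52} with $\mathrm{Fix}(\widetilde F) = L^{-1}\mathcal T^{-1}(0)$, apply Wittmann's theorem to get $\tilde u^k \to \Pi_{L^{-1}\mathcal T^{-1}(0)}(L^{-1}u^0)$ in the standard norm, and then map back by $L$; one checks that $L\,\Pi_{L^{-1}\Omega}(L^{-1}u^0) = \Pi^{\mathcal S}_{\Omega}(u^0)$. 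The remaining verifications (nonexpansiveness, $\mathrm{Fix}(F) = \mathcal T^{-1}(0)$, positive-definiteness of $\mathcal S$, the stepsize hypotheses) are all either already established in the excerpt or routine, so the proof is essentially a careful assembly of Propositions~\ref{prop-B1} and~\ref{prop:non-ex} with the cited Wittmann theorem.
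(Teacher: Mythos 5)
Your proposal is correct and follows the same route as the paper, which states this result purely as an instance of Wittmann's theorem applied to the Halpern iteration \eqref{hal} of the operator $F$ that Proposition \ref{prop:non-ex} shows to be nonexpansive in $\|\cdot\|_{\mathcal S}$ and whose fixed-point set equals $\mathcal T^{-1}(0)$; your verifications (positive definiteness of $\mathcal S$ via the Schur complement, the stepsize conditions for $\lambda_k = 1/(k+2)$) are all sound. Your observation that the limit is the projection in the $\mathcal S$-norm rather than the Euclidean projection defined in the paper's notation section is a genuine and correct refinement — the transport through $\mathcal S^{-1}=LL^{T}$ and the identity $L\,\Pi_{L^{-1}\Omega}(L^{-1}u^0)=\Pi^{\mathcal S}_{\Omega}(u^0)$ resolve it exactly as one should.
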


By {\cite[Theorem 2.1]{Lied21}}, we have the following results on the rate of convergence for Algorithm \ref{alg4}.

\begin{thm}\label{Coll}
Assume that ${\cal T}^{-1}(0)\neq \emptyset$. Let $\{u^k\}_{k=0}^{\infty}$ be the infinite sequence generated by Algorithm \ref{alg4}. Then, it holds that
\begin{align*}
\|u^{k}-F(u^{k})\|_{\mathcal{S}}\leq\frac{2\|u^{0}-u^{*}\|_{\mathcal{S}}}{k+1},~\forall\, k\geq0 \; \mathrm{and} \; u^{*}\in {\cal T}^{-1}(0).
\end{align*}
\end{thm}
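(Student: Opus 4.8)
The plan is to observe that Algorithm \ref{alg4} is precisely the Halpern iteration \eqref{hal} for the relaxed operator $F=F_\rho$, and then to carry that iteration through the linear change of variables $u\mapsto L^{-1}u$ so that Lieder's rate bound \cite[Theorem 2.1]{Lied21} for non-expansive maps in a Euclidean space applies directly. The estimate of Theorem \ref{Coll} then follows by translating the resulting inequality back through $L$.

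First I would check that the sequence $\{u^k\}$ produced by Algorithm \ref{alg4} satisfies \eqref{hal}. The triple $(\bar v_\Lambda^k,\bar w_s^k,\bar\xi^k)$ is exactly one un-relaxed PADMM sweep applied at $u^k$, so by Proposition \ref{prop-B1} (with $\rho$ the relaxation parameter) the point $\hat u^{k+1}=\rho\bar u^k+(1-\rho)u^k$ equals $F(u^k)$; consequently $u^{k+1}=\tfrac{1}{k+2}u^0+\tfrac{k+1}{k+2}F(u^k)$, which is \eqref{hal}. Since $\rho\in(0,2]$, Proposition \ref{prop:non-ex} guarantees that $\widetilde F$ is non-expansive with respect to the standard inner product of $\mathcal{U}$ (the endpoint $\rho=2$ being admissible there), and that $F$ is non-expansive in $\|\cdot\|_{\mathcal{S}}$.

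Next I would pass to the normalized iteration. Recall from the construction preceding Proposition \ref{prop:non-ex} that $\mathcal{S}^{-1}=LL^{\top}$ with $L$ invertible (lower triangular with positive diagonal), and that \eqref{eq:tildeF} reads $L^{-1}F(u)=\widetilde F(L^{-1}u)$. Setting $\tilde u^k:=L^{-1}u^k$ and applying $L^{-1}$ to \eqref{hal} gives
\[
\tilde u^{k+1}=\tfrac{1}{k+2}\,\tilde u^{0}+\tfrac{k+1}{k+2}\,\widetilde F(\tilde u^{k}),
\]
so $\{\tilde u^k\}$ is the Halpern iteration with stepsizes $\lambda_k=1/(k+2)$ for the non-expansive operator $\widetilde F$. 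Because $L$ is a bijection and $\mathrm{Fix}(F)=\mathcal{T}^{-1}(0)\neq\emptyset$, we have $\mathrm{Fix}(\widetilde F)=L^{-1}\mathcal{T}^{-1}(0)\neq\emptyset$, so \cite[Theorem 2.1]{Lied21} applies and yields $\|\tilde u^k-\widetilde F(\tilde u^k)\|\le 2\|\tilde u^0-\tilde u^*\|/(k+1)$ for every $\tilde u^*\in\mathrm{Fix}(\widetilde F)$ and all $k\ge 0$.

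Finally I would translate the bound back, using that $\mathcal{S}=L^{-\top}L^{-1}$, so that $x\mapsto L^{-1}x$ is an isometry from $(\mathcal{U},\|\cdot\|_{\mathcal{S}})$ onto $(\mathcal{U},\|\cdot\|)$; applying $\|L^{-1}x\|=\|x\|_{\mathcal{S}}$ to $x=u^k-F(u^k)$ (and using $L^{-1}F(u^k)=\widetilde F(\tilde u^k)$) gives $\|\tilde u^k-\widetilde F(\tilde u^k)\|=\|u^k-F(u^k)\|_{\mathcal{S}}$, while applying it to $x=u^0-u^*$, with $u^*:=L\tilde u^*$ ranging over all of $\mathcal{T}^{-1}(0)$, gives $\|\tilde u^0-\tilde u^*\|=\|u^0-u^*\|_{\mathcal{S}}$. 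Substituting produces the claimed estimate. I do not expect a genuine obstacle here: the substantive work --- non-expansiveness of $\widetilde F$ in the Euclidean norm and the conjugacy identity \eqref{eq:tildeF} --- is already established, so the only care needed is the bookkeeping of the change of variables (that the Halpern structure, the stepsize sequence, and the fixed-point sets all transform correctly under $L^{-1}$) together with matching the index convention of \cite[Theorem 2.1]{Lied21} to the form of \eqref{hal}.
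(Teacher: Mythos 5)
Your proposal is correct and follows exactly the route the paper intends: the paper proves Theorem \ref{Coll} simply by invoking \cite[Theorem 2.1]{Lied21}, and your argument spells out the implicit bookkeeping --- identifying Algorithm \ref{alg4} with the Halpern iteration \eqref{hal} via Proposition \ref{prop-B1}, conjugating by $L^{-1}$ using \eqref{eq:tildeF} so that Lieder's Euclidean-norm result applies to $\widetilde F$, and translating back through the isometry $\|L^{-1}x\|=\|x\|_{\mathcal S}$. No gaps; this is the same proof, just written out in full.
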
 
\section{A fast implementation for solving the subproblems of APADMM}\label{sec4}

In this section, we provide a fast implementation for solving subproblems of APADMM for solving problem \eqref{DP3} with different proximal operators $\mathcal{S}_2$ depending on the problem scale.  

\subsubsection*{The projection onto the cone $\Gamma$}

In the subproblem \eqref{ProPADMM}, the variable $\bar{\Lambda}^{k}$ is obtained through the following projection:
\begin{align}\label{ulambda}
\bar{\Lambda}^{k}=\Pi_{\Gamma^{\frac{p(p+1)}{2}}}\left[\frac{\mu_{1}\Lambda^{k}-\sigma(A_{w}^{\top}{z}^{k}+B_{w}^{\top}{y}^{k}-r+\frac{1}{\sigma}{w}^{k})}{\sigma+\mu_{1}}\right],
\end{align}
and the variable $\bar{v}^{k}=[\bar{v}_{1}^{k};\ldots;\bar{v}_{M}^{k}]$ is computed as follows: for $i\ge 1$,
\begin{equation}\label{upv}
\bar{v}_{i}^{k}=\Pi_{\Gamma^{\frac{n(n+1)}{2}}}\left[\frac{\mu_{1} v_{i}^{k}-(\sigma{z}_{i}^{k}+{s}_{i}^{k})}{\sigma+\mu_{1}}\right].
\end{equation}

\subsubsection*{A lifting technique for solving large-scale sparse linear systems}

To enhance the efficiency of the algorithm, we introduce a lifting technique to solve large-scale sparse linear systems in addressing the subproblem \eqref{PADMMb}, which is suitable for both {\bf Case 1} and {\bf Case 2} as mentioned before.

\quad {\bf Case 1.}\
In the APADMM iteration scheme with the operator $\mathcal{S}_{2}=\tilde{\mathcal{S}}_0$, the optimality condition for $\bar{\xi}^{k}$ in \eqref{PADMMb} is
\begin{align}\label{bigs}
\left(\tilde{\mathcal{A}}\tilde{\mathcal{A}}^*+\frac{1}{\sigma}\mathcal{\tilde{S}}_{0}\right){\bar{\xi}}^{k}
=\frac{1}{\sigma}\mathcal{\tilde{S}}_{0}\xi^{k}-\frac{1}{\sigma}\left(\tilde{b}
+\tilde{\mathcal{A}}w_{s}^{k}\right)-\tilde{\mathcal{A}}(v_{\Lambda}^{k}-\tilde{r}).
\end{align}
Note that calculating $\tilde{\mathcal{A}}\tilde{\mathcal{A}}^*$ is time-consuming, and even if $\tilde{\mathcal{A}}$ is sparse, $\tilde{\mathcal{A}}\tilde{\mathcal{A}}^*$ turns out to be dense for problem \eqref{DP3}. Consequently, solving the linear system \eqref{bigs} by using direct methods becomes challenging. To deal with this challenge, we introduce the auxiliary variable $\eta^{\prime}=\tilde{\mathcal{A}}^{*}{\bar{\xi}}^{k}$
and solving the following augmented system
\begin{align}\label{bigslin}
&\left[
\begin{array}{cc}
\frac{1}{\sigma}\mathcal{\tilde{S}}_{0}& \tilde{\mathcal{A}} \\
\tilde{\mathcal{A}}^{\ast} & -\mathcal{I} \\
\end{array}
\right]\left[
\begin{array}{c}
{\bar{\xi}}^{k} \\
\eta^{\prime} \\
\end{array}
\right]=\left[
\begin{array}{c}
\frac{1}{\sigma}\mathcal{\tilde{S}}_{0}\xi^{k}-\frac{1}{\sigma}\left(\tilde{b}+\tilde{\mathcal{A}}w_{s}^{k}\right)-\tilde{\mathcal{A}}(v_{\Lambda}^{k}-\tilde{r}) \\
0 \\
\end{array}
\right].
\end{align}
This lifting technique allows us to avoid the computations of $\tilde{\mathcal{A}}\tilde{\mathcal{A}}^*$ and the matrix-vector product $\tilde{\mathcal{A}}^{*}{\bar{\xi}}^{k}$ in the update of $\bar{\Lambda}^{k}$.

\quad {\bf Case 2.}\ For large-scale problems, when substituting the proximal operator ${\cal S}_{2} = \textrm{sGS}({\mathcal{Q}})+ \tilde{\mathcal{S}}'_0$ into the subproblem \eqref{PADMMb}, we observe that $\bar{\xi}^{k}=[\bar{z}^{k}; \bar{y}^{k}]$ in Algorithm \ref{alg4} is equivalent to ${\xi}^{k+1}=[{z}^{k+1}; {y}^{k+1}]$ in Algorithm \ref{alg2}. Therefore, we can obtain $\bar{\xi}^{k}$ by solving \eqref{sGSPADMMa}-\eqref{sGSPADMMc} in Algorithm \ref{alg2} according to sGS decomposition approach. In particular, from the equation \eqref{sGSPADMMa}, $y^{k+1/2}$  can be obtained via  solving the following problem
\begin{align}
y^{k+1/2}&=\underset{y\in \mathcal{Y}}{\mathrm{argmin}}\left\{\frac{\sigma}{2}\|B_{w}^{\top}y+H_1^{b}\|^{2}+\frac{1}{2}\|y-y^{k}\|^2_{{{\sigma\mu_{3} {{\mathcal{I}_{\mathcal{Y}}}}}}}\right\}\notag.
\end{align}
The optimality condition of this problem implies that $y^{k+1/2}$ is the solution to the following linear equation
\begin{align}
{{(\mu_{3}\mathcal{I_{Y}}+B_{w}B_{w}^{\top})y=\mu_{3} y^{k}-B_{w}H_1^{b}}}\notag,
\end{align}
where $H_1^{b} = -r+\frac{1}{\sigma}{w}^{k+1}+{\Lambda}^{k+1} +A_{w}^{\top}z^{k}.$ Additionally, the update of $z^{k+1}$ in \eqref{sGSPADMMb} can be written explicitly in the following form:
\begin{align}
{z}^{k+1}=\underset{z\in \mathcal{Z}}{\mathrm{argmin}}&\left\{\langle z,b_{w}\rangle+\frac{\sigma}{2}\|z+\frac{1}{\sigma}{s}^{k+1}+{v}^{k+1}\|^{2}+\frac{\sigma}{2}\|A_{w}^{\top}z+H_2^{b}\|^{2}+\frac{1}{2} \|z-z^{k} \|^2_{\sigma\mu_{2}{\mathcal{I}_{\mathcal{Z}}}}\right\}\notag.
\end{align}
{From its optimality condition}, we know that
\begin{align*}
&\left[(\mu_{2}+1)\mathcal{I_{Z}}+A_{w}A_{w}^{\top}\right]{z}^{k+1}=\mu_{2}z^{k}-\left[{v}^{k+1}+\frac{1}{\sigma}(b_{w}+{s}^{k+1})+A_{w}H_2^{b}\right].
\end{align*}
To update ${z}^{k+1}$, we can employ a similar lifting technique as used in \eqref{bigs} to solve the following linear system:
\begin{align}\label{equ32}
&\left[
  \begin{array}{cc}
    (\mu_{2}+1)\mathcal{I}_{\mathcal{Z}}& A_{w} \\
    A_{w}^{\top} & -\mathcal{I} \\
  \end{array}
\right]\left[
         \begin{array}{c}
           {z}^{k+1} \\
           z^{\prime} \\
         \end{array}
       \right]=\left[
                 \begin{array}{c}
                   \mu_{2}z^{k}-(v^{k}+\frac{1}{\sigma}(b_{w}+s^{k})+A_{w}H_{2}^{b}) \\
                   0 \\
                 \end{array}
               \right],
\end{align}
where $H_2^{b} = -r+\frac{1}{\sigma}{w}^{k+1}+{\Lambda}^{k+1} + B_{w}^{\top}y^{k+1/2}.$
Finally, we obtain $y^{k+1}$ similarly to $y^{k+1/2}$ as follows:
\begin{align}
{{(\mu_{3} \mathcal{I_{Y}}+ B_{w}B_{w}^{\top})y^{k+1}=\mu_{3} y^{k}- B_{w}H_{1}^{f}}}\notag,
\end{align}
where $H_{1}^{f} = -r+\frac{1}{\sigma}{w}^{k+1}+{\Lambda}^{k+1} +A_{w}^{\top}{z}^{k+1}.$
\vspace{4pt}
% \begin{remark}
% There are significant differences between the linear systems appeared in  PADMM\_2blk and PADMM\_sGS. In PADMM\_2blk, the variable $\xi$ consists of both $y$ and $z$, which results in a larger linear system \eqref{bigslin}. In contrast, PADMM\_sGS utilizes the sGS decomposition technique \cite{Lischur,Lisgs} to break down a large linear system into smaller ones corresponding to $y$ and $z$ separately, and solve them individually. Numerical experiments suggest that PADMM\_2blk is suitable for solving small and medium-sized problems, while PADMM\_sGS is better suited for addressing large-scale problems.	
% \end{remark}

\section{Numerical experiments}\label{sec5}

In this section, we present the numerical performance of APADMM in solving ODC problems for linear time-invariant systems. The APADMM implementation was done in C language (using the MSVC compiler) and executed on a workstation equipped with an Intel(R) Core(TM) i9-10900 CPU@2.80GHz and 64GB RAM. 

\subsection{Implementation details}
For the data in ODC problems \eqref{a1}, matrices $A$ and $B_2$ are generated following a normal distribution with zero mean and unit variance. Subsequently, $C$ and $D$ are obtained by a random orthogonal matrix to satisfy $C^{\top}D=0$. Additionally, we generate $B_1$ to satisfy $0\succeq\mathcal{F}_{i}(W)$ with $i=1,\dots,M$ for a random $W\in \mathbb{S}^{p}_+$.

\quad After generating the data, we compared the performance of APADMM with GPADMM in \eqref{padmm}, sGS\_PADMM \cite{Lischur}, and popular solvers such as SCS (C source code version 3.2.3), COSMO (Python interface version 0.8.8 for Julia language), and MOSEK (Python interface). The sGS\_PADMM framework is presented as follows:
\begin{algorithm}[H]
\caption{sGS\_PADMM}
\label{algsGS}
\begin{algorithmic}[]
\STATE Let $\sigma>0, $ $\mu_{1}>0, $ $\mu_{2}>0,$ $\mu_{3}>0,$ and $\tau\in(0, (1+\sqrt{5})/2)$ be given parameters. Set $u^{0}=(v_{\Lambda}^{0},\xi^{0},w_{s}^{0})\in\mathcal{U} $ to be the initial point.\\
{$k=0,1,...,$}
\begin{align}
\xi^{k+1}&=\underset{\xi\in \mathcal{Z}\times\mathcal{Y} }{\mathrm{argmin}}~\mathcal{L}_\sigma ({v}_{\Lambda}^{k},\xi, {w}_{s}^{k}) + \frac{1}{2} \|{\xi}  - \xi^{k} \|^2_{\textrm{sGS}({\mathcal{Q}})+ \tilde{\mathcal{S}}'_0}\notag\\
v_{\Lambda}^{k+1}&=\underset{{v}_{\Lambda}\in \mathcal{V} }{\mathrm{argmin}}~\mathcal{L}_\sigma (v_{\Lambda},{\xi}^{k}, {w}_{s}^{k})  + \frac{1}{2} \|{v}_{\Lambda} - v^k_{\Lambda} \|^2_{{{\mu_{1} \mathcal{I}_{\mathcal{V}}}}}\notag\\
w_{s}^{k+1}&= w_{s}^k + \sigma\tau (\tilde{\mathcal{A}}^*{\xi}^{k+1} + v_{\Lambda}^{k+1}  -\tilde{r} )\notag
\end{align}
\end{algorithmic}
\end{algorithm}

\quad In the numerical experiments, SCS, COSMO, and MOSEK adopt the default settings (refer to the COSMO \cite{GarCOSMO}, MOSEK \cite{mosek}, and SCS \cite{Parikh} manuals for details). For APADMM, GPADMM, and sGS\_PADMM, we utilize the \verb"LDL"$^{\top}$ factorization \cite{SprseLDL} or the \verb"mkl-pardiso" function\footnote{\url{https://www.intel.com/content/www/us/en/docs/onemkl/get-started-guide/2023-0/overview.html}} to solve the linear systems from equations \eqref{bigslin} or \eqref{equ32}. The updates of $\Lambda$ in \eqref{ulambda} and $v$ in \eqref{upv} are obtained by using eigenvalue decomposition in \verb"MKL" for projection calculation. Regarding parameter settings, we choose $\mu_0=\mu_1=\mu_2=\mu_3=1e$-4 for these three methods, $\rho=2$ for APADMM, $\rho=1.8$ for GPADMM, and $\tau=1.618$ for sGS\_PADMM. We also adopt the strategy from \cite{2018samga} to adjust the penalty parameter $\sigma$. In addition, APADMM, GPADMM, and sGS\_PADMM employ the following stopping criterion based on the KKT relative residuals, which is similar to the ones used in SCS.
\begin{equation*}
  \mathrm{Err\_rel}:=\max\left\{\mathrm{p\_res},\mathrm{d\_res},\mathrm{\eta_{gap}}\right\},
\end{equation*}
where $\mathrm{d\_res}:=\max\{\eta_{s}^{k},\eta_{eq}^{k}\}$ with
\begin{align}
\eta_{s}^{k}=\frac{\|v^{k}+z^{k}\|_{\infty}}{1+\max\left\{\|v^{k}\|_{\infty},\|z^{k}\|_{\infty}\right\}}, \quad
\eta_{eq}^{k}=\frac{\|A_{w}^{\top}{z}^{k}+B_{w}^{\top}{y}^{k}+\Lambda^{k}-r\|_{\infty}}{1+\max\left\{\|A_{w}^{\top}{z}^{k}\|_{\infty},\|B_{w}^{\top}{y}^{k}\|_{\infty},\|\Lambda^{k}\|_{\infty},\|r\|_{\infty}\right\}},\notag
\end{align}
and $\mathrm{p\_res}:=\max\left\{\eta_{z}^{k},\eta_{\Lambda}^{k},\eta_{y}^{k},\underset{i=1,\ldots,M}{\max}\{\eta_{v_{i}}^{k}\}\right\}$ with
\begin{align}
\eta_{\Lambda}^{k}&=\frac{\|\Lambda^{k}-\Pi_{\Gamma^\frac{p(p+1)}{2}}(\Lambda^{k}-w^{k})\|_{\infty}}{1+\max\left\{\|\Lambda^{k}\|_{\infty},\|w^{k}\|_{\infty}\right\}}, \hspace{1.65cm}\eta_{y}^{k}=\frac{\|B_{w}w^{k}\|_{\infty}}{1+\|B_{w}w^{k}\|_{\infty}},\notag\\
\eta_{z}^{k}&= \frac{\|b_{w}+A_{w}w^{k}+s^{k}\|_{\infty}}{1+\max\left\{\|b_{w}\|_{\infty},\|s^{k}\|_{\infty},\|A_{w }w^{k}\|_{\infty}\right\}},\quad\eta_{v_{i}}^{k}=\frac{\|v_{i}^{k}-\Pi_{\Gamma^\frac{n(n+1)}{2}}(v_{i}^{k}-s_{i}^{k})\|_{\infty}}{1+\max\left\{\|v_{i}^{k}\|_{\infty},\|s_{i}^{k}\|_{\infty}\right\}}.\notag
\end{align}
Finally, the relative gap $\mathrm{\eta_{gap}}$ is given by
\begin{align}
\mathrm{\eta_{gap}}:=\frac{|\mathrm{p\_obj}-\mathrm{d\_obj}|}{1+\max\left\{|\mathrm{p\_obj}|,|\mathrm{d\_obj}|\right\}},\notag
\end{align}
where 
\begin{equation*}
\mathrm{p\_obj}:= \langle r,w^k\rangle,\quad \mathrm{d\_obj}:=-\langle z^k,b_{w}\rangle.
\end{equation*}

\subsection{Verification of decentralized structure}
In this subsection, taking the chemical reactor system from \cite{Dectest} as an example, we verify that the feedback gain matrix $K$ constructed from the solution obtained by Algorithm \ref{alg4} satisfies the decentralized structure defined in \eqref{a3}. We also test the stability of the control.  In this experiment, the estimated matrices $(A,B_{2})$ and the given matrices $(B_{1},C,D)$ in the LTI system \eqref{a1} are provided by:
\begin{align*}
 A &= \left[
  \begin{array}{cccc}
    -1.38  & -0.2077 & 6.715  &-5.676\\
    -0.5814&-4.29& 0& 0.675\\
    1.067&4.273&-6.654&5.893\\
    -0.048&-4.273& 1.343&-2.104\\
  \end{array}
\right],~ B_{1}=\left[
                 \begin{array}{cccc}
                   1 & 0 & 0 &0 \\
                   0 & 1 & 0 &0\\
                   0 & 0 & 1 &0\\
                   0 & 0 & 0 &1\\
                 \end{array}
               \right],
\\
  B_{2} & =\left[
             \begin{array}{cc}
               0,   &  0 \\
               5.679&  0 \\
              1.136 &-3.146 \\
              1.136 &  0\\
             \end{array}
           \right],~ C=\left[
                        \begin{array}{cccc}
                          0&1&0&0\\
                          0&0&1&0\\
                          0&0&0&0\\
                          0&0&0&0\\
                        \end{array}
                      \right],~ D = \left[
                                     \begin{array}{cc}
                                       0 & 0 \\
                                       0 & 0 \\
                                       1 & 0 \\
                                       0 & 1 \\
                                     \end{array}
                                   \right].
\end{align*}
Furthermore, we introduce noise to the system matrix $A$ with a magnitude of $\pm5\%$ of its nominal values and consider $M=4$ as the number of extreme systems. The solution $W$ obtained by APADMM with $\mathrm{Err\_rel}\leq 1e$-7 is given by 

\begin{equation*}
W=\left[
    \begin{array}{cccc|cc}
      1.94008 & -0.15198 & 0        & 0         & 0.05891    &  0 \\
      -0.15198& 0.09056  & 0        & 0         & 0.05654    &  0 \\
      0       & 0        & 0.30031  & 0.26107   & 0          & -0.17958\\
      0       & 0        & 0.26107  & 0.47226   & 0          & -0.23248 \\\hline
      0.05891 & 0.05654  & 0        & 0         & 0.04935    & 0 \\
      0       & 0        & -0.17958 & -0.23248  & 0          & 0.13116\\
    \end{array}
  \right].
\end{equation*}
Then, the feedback gain matrix $K=W_{2}^{\top}W_{1}^{-1}$ is
\begin{equation}\label{decenK}
K=W_{2}^{\top}W_{1}^{-1}=\left[
                           \begin{array}{cc|cc}
                             0.09128 & 0.77760 & 0 & 0 \\\hline
                             0 & 0 & -0.32737 & -0.31129 \\
                           \end{array}
                         \right],
\end{equation}
which satisfies the decentralized structure. In the simulation, $w(t)$ is characterized as a vector of the impulse disturbance. The responses of all state variables are illustrated in Fig.\ref{fig1}.
\begin{figure}[H]
  \centering
  \includegraphics[width=6.3cm]{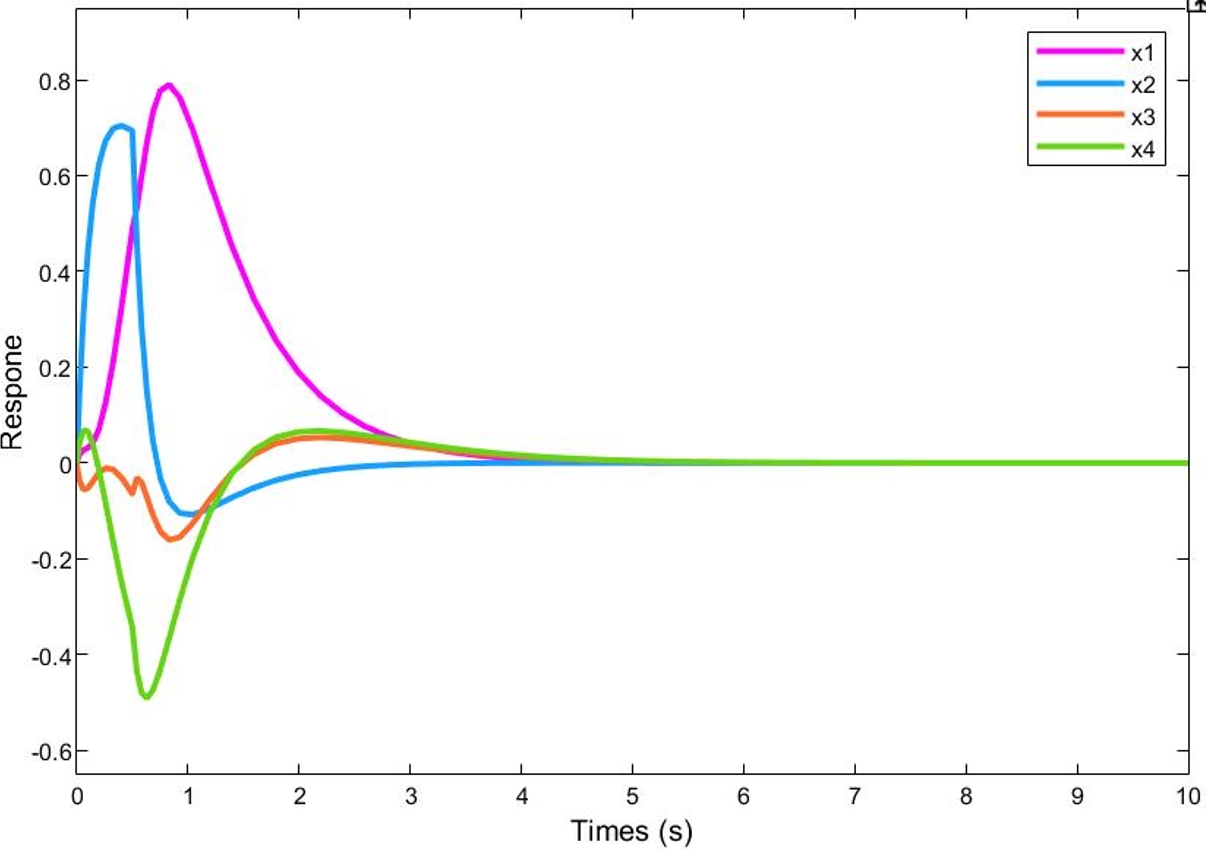}
  \caption{System response under the feedback matrix $K$.}\label{fig1}
\end{figure}
\ni From the system response in Fig.\ref{fig1}, it can be seen that the robust stability is guaranteed for the LTI system.

\subsection{Numerical results for the lifting technique and acceleration}

In this subsection, we will first demonstrate the superiority of the lifting technique for solving large-scale sparse linear systems instead of solving the normal equation \eqref{bigs} directly. In particular, the numerical results of the lifting technique are presented in Table \ref{tablift1}, where PADMM\_AA$^{\top}$ denotes the result of solving the normal equation \eqref{bigs} by $\verb"LDL"^{\top}$ decomposition directly, and PADMM\_Lifting denotes the result of the lifting technique.
\begin{table}[!htbp]
\centering
\caption{The performance comparison between PADMM\_$\mathrm{AA}^{\top}$ and PADMM\_Lifting with Err\_rel=1e-5.}\label{tablift1}
\begin{tabular}{cccccccc}
\toprule
Algorithm                      &n, m, M        &Iter      &Err\_rel      &Time (s)               &Lin\_sys Time (s)\footnotemark[1] \\
\hline
test 1 \\
PADMM\_$\mathrm{AA}^{\top}$    &(10, 5, 11)    &993        &4.994e-06    &3.54e-01              &1.59e-01    \\
PADMM\_Lifting                 &(10, 5, 11)    &941        &8.273e-06    &1.87e-01             &1.59e-02    \\
\hline
test 2\\
PADMM\_$\mathrm{AA}^{\top}$    &(30, 15, 11)   &1112       &9.698e-06    &5.22e+01              &2.94e+01    \\
PADMM\_Lifting                 &(30, 15, 11)   &1353       &2.521e-06    &4.55e+00             &1.44e+00    \\
\hline
test 3\\
PADMM\_$\mathrm{AA}^{\top}$    &(50, 25, 11)   &1202       &2.674e-06    &6.54e+02              &2.42e+02    \\
PADMM\_Lifting                 &(50, 25, 11)   &1151       &5.278e-06    &2.13e+01             &9.55e+00    \\
\bottomrule
\end{tabular}
\footnotetext[1]{Lin\_sys Time refers to the total time spent on solving the linear system.}
\end{table}From Table \ref{tablift1}, we observe that the lifting technique is more than 10 times faster than directly solving the normal equation \eqref{bigs}, and the acceleration effect improves with the increase in problem size. The possible reason is that $\tilde{\mathcal A}\tilde{\mathcal A}^{*}$ turns out to be a dense matrix even if $\tilde{\mathcal A}$ is a sparse matrix, and the complexity of the $\verb"LDL"^{\top}$ decomposition is directly related to sparsity {\cite[Chapter 4]{Davis}}. In contrast, the lifting technique can avoid calculating $\tilde{\mathcal A}\tilde{\mathcal A}^{*}$ and fully exploit the sparsity of $\tilde{\mathcal A} $.

\quad According to the numerical testing, the restart strategy \cite[Section 11.4]{Nemirov} and \cite[Section 5.1]{Neste} prove to be highly beneficial in enhancing the performance of APADMM. To determine an appropriate fixed iteration number for restarting the APADMM algorithm, we randomly generate 30 examples and experiment with various restart intervals to solve problem \eqref{DP3}. As illustrated in Fig. \ref{figrestartp}, setting the restart interval for APADMM with the operator $\textrm{sGS}({\mathcal{Q}})+ \tilde{\mathcal{S}}'_0$ to 58 results in significantly lower average and median iteration numbers. Conversely, for APADMM with the operator $\tilde{\mathcal{S}}'_0$, an optimal restart interval is found to be 18.
\begin{figure}[H]    
\subfloat[APADMM with operator $\textrm{sGS}({\mathcal{Q}})+ \tilde{\mathcal{S}}'_0${\centering}]{\includegraphics[width=0.5\textwidth, height= 5.cm]{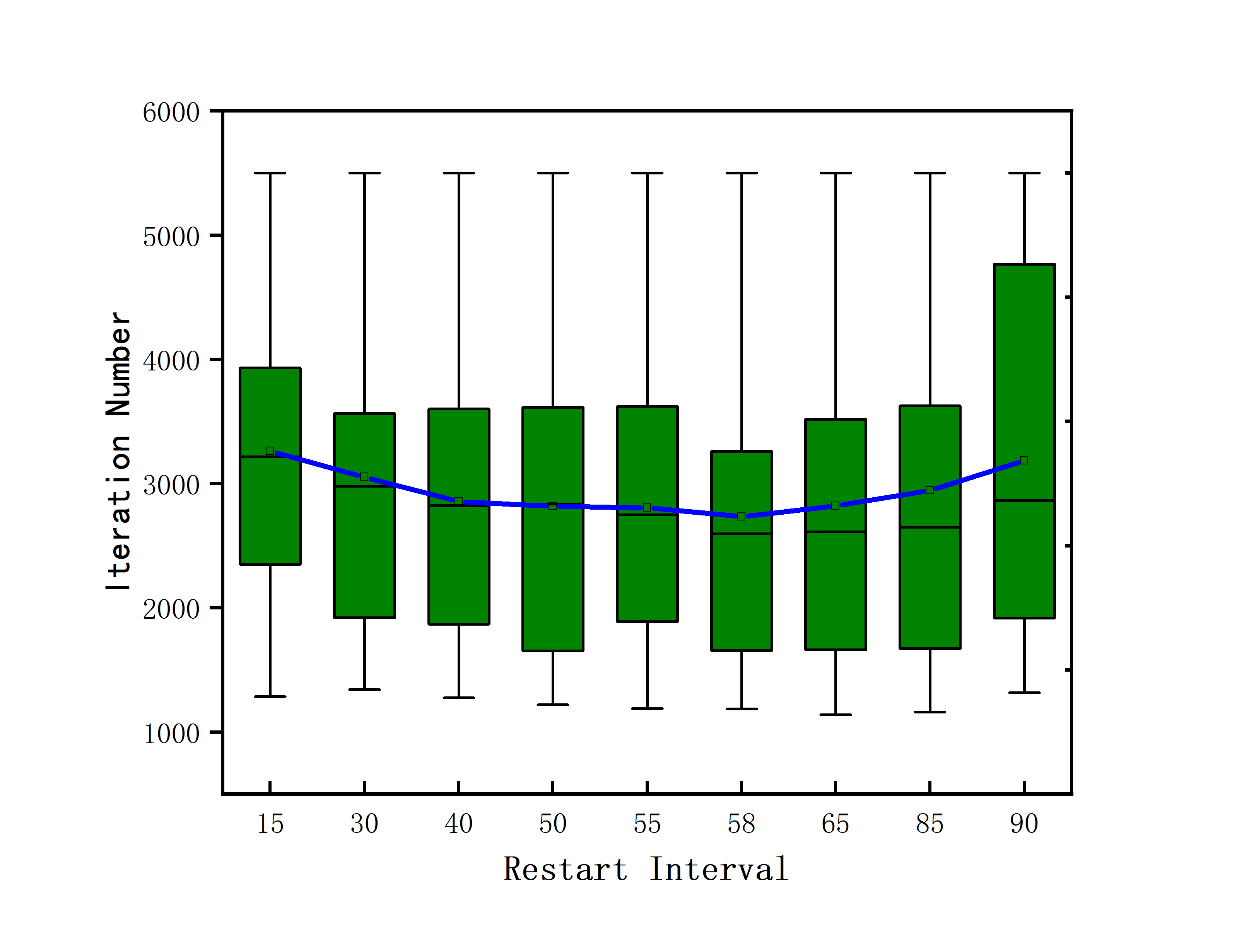}}         
\subfloat[APADMM with operator $\tilde{\mathcal{S}}_0${\centering}]{\includegraphics[width=0.5\textwidth, height= 5.cm]{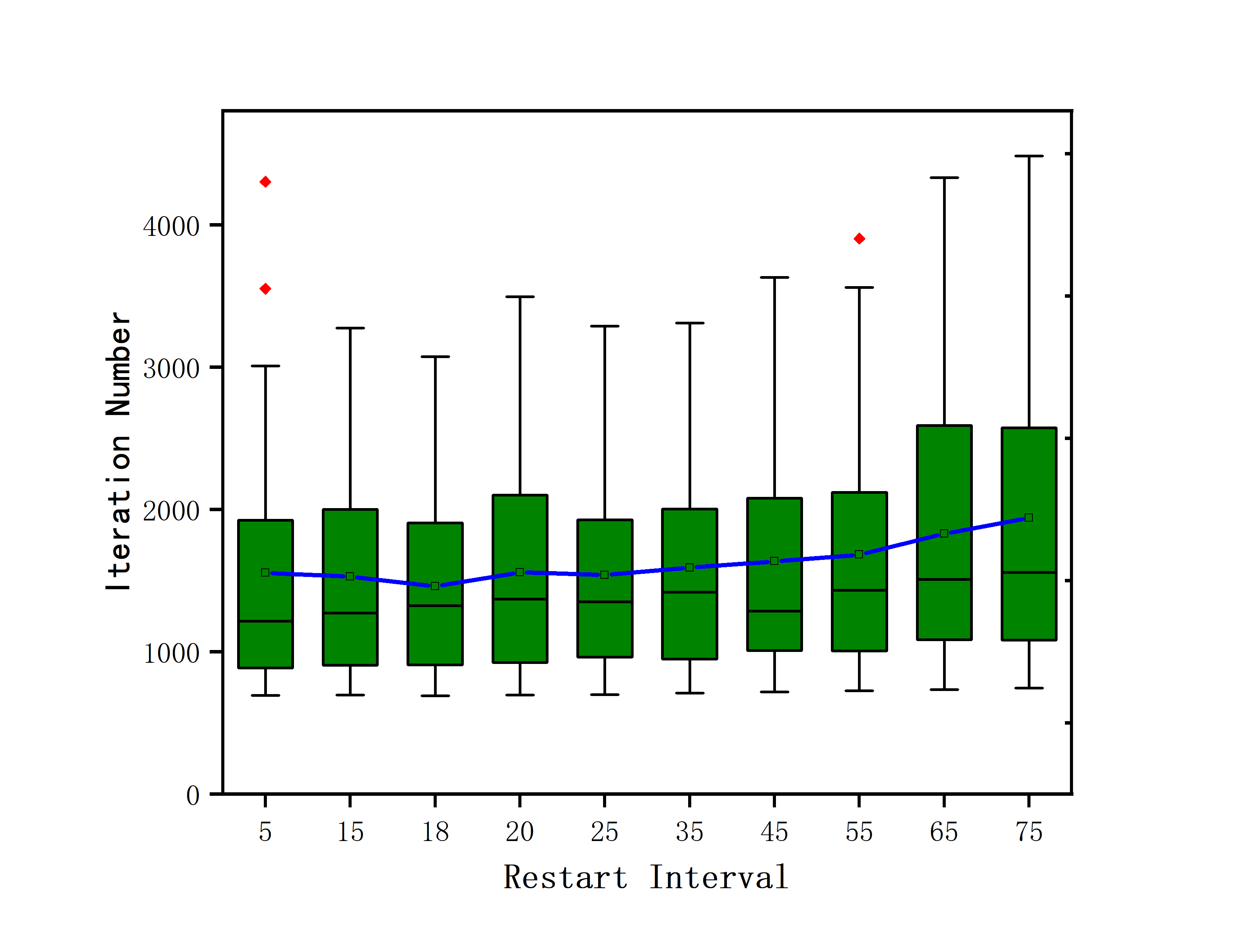}\label{figreb}}  
\caption{The performance of APADMM with various restart intervals.}\label{figrestartp}
\end{figure}
After setting the restart interval, we compare APADMM with some classical PADMM-type methods such as GPADMM and sGS\_PADMM. Both APADMM and GPADMM utilize the proximal operator $\textrm{sGS}({\mathcal{Q}})+ \tilde{\mathcal{S}}'_0$. Fig.\ref{figp2} shows that APADMM can achieve a better solution in terms of Err\_rel with fewer iterations than GPADMM and sGS\_PADMM for different problem scales. And, GPADMM exhibits comparable performance to sGS\_PADMM. 
\begin{figure}[!htbp]
	\centering
	\begin{subfigure}{0.325\linewidth}
		\centering
		\includegraphics[width=1.1\linewidth]{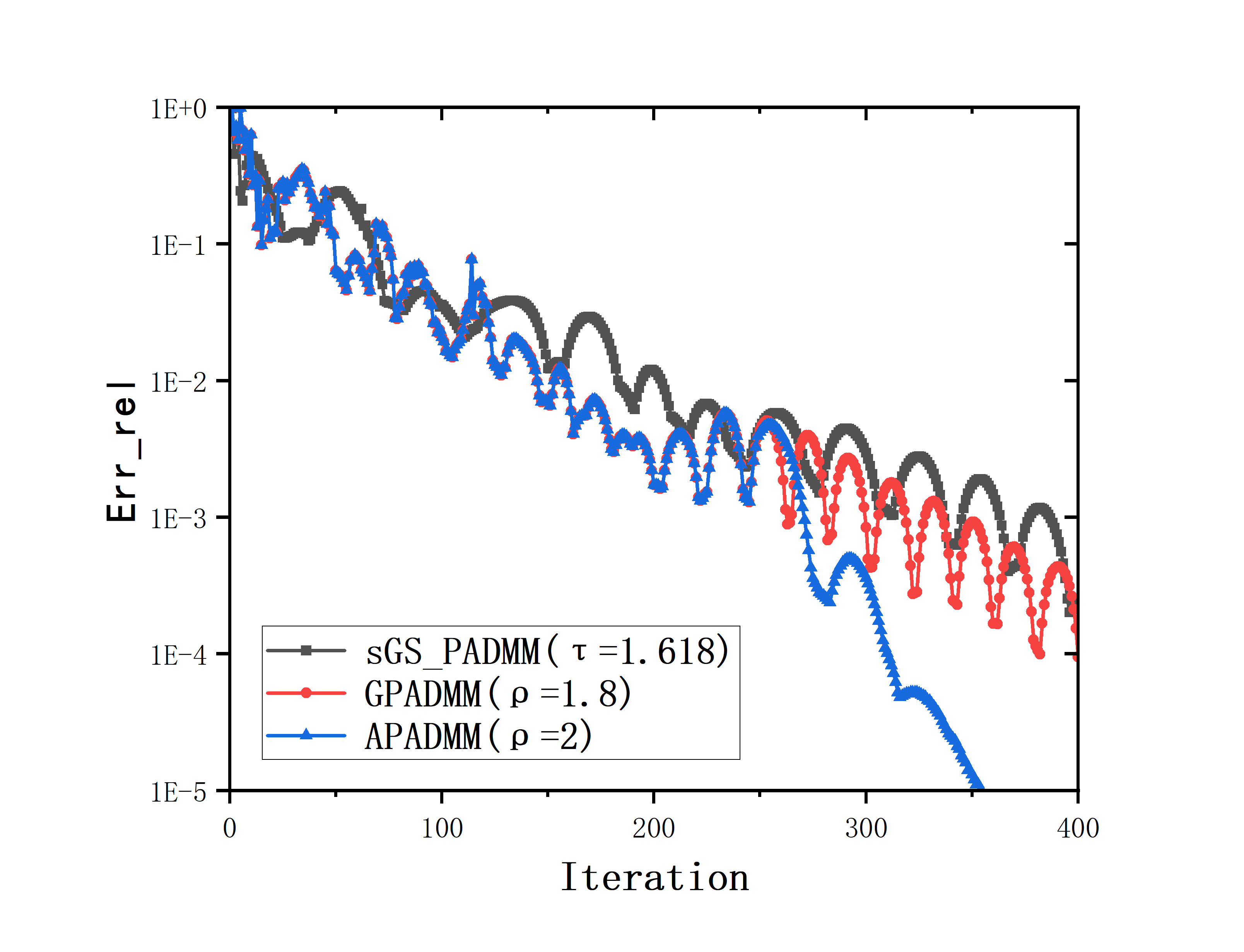}
		\caption{n=10, m=3, M=6}
		\label{chutian1}
	\end{subfigure}
	\centering
	\begin{subfigure}{0.325\linewidth}
		\centering
		\includegraphics[width=1.1\linewidth]{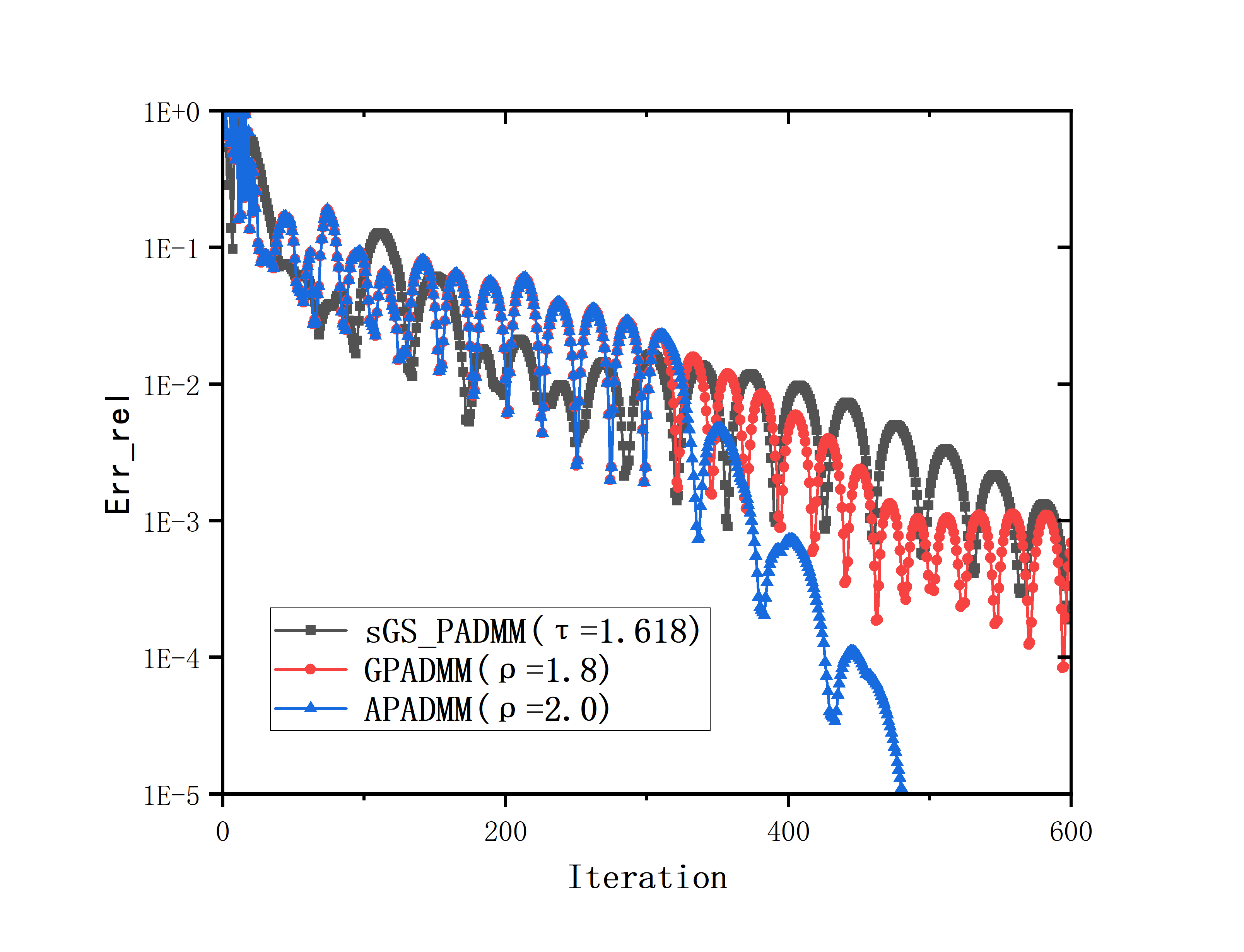}
		\caption{n=12, m=3, M=6}
		% \label{chutian2}
	\end{subfigure}
	\centering
	\begin{subfigure}{0.325\linewidth}
		\centering
		\includegraphics[width=1.1\linewidth]{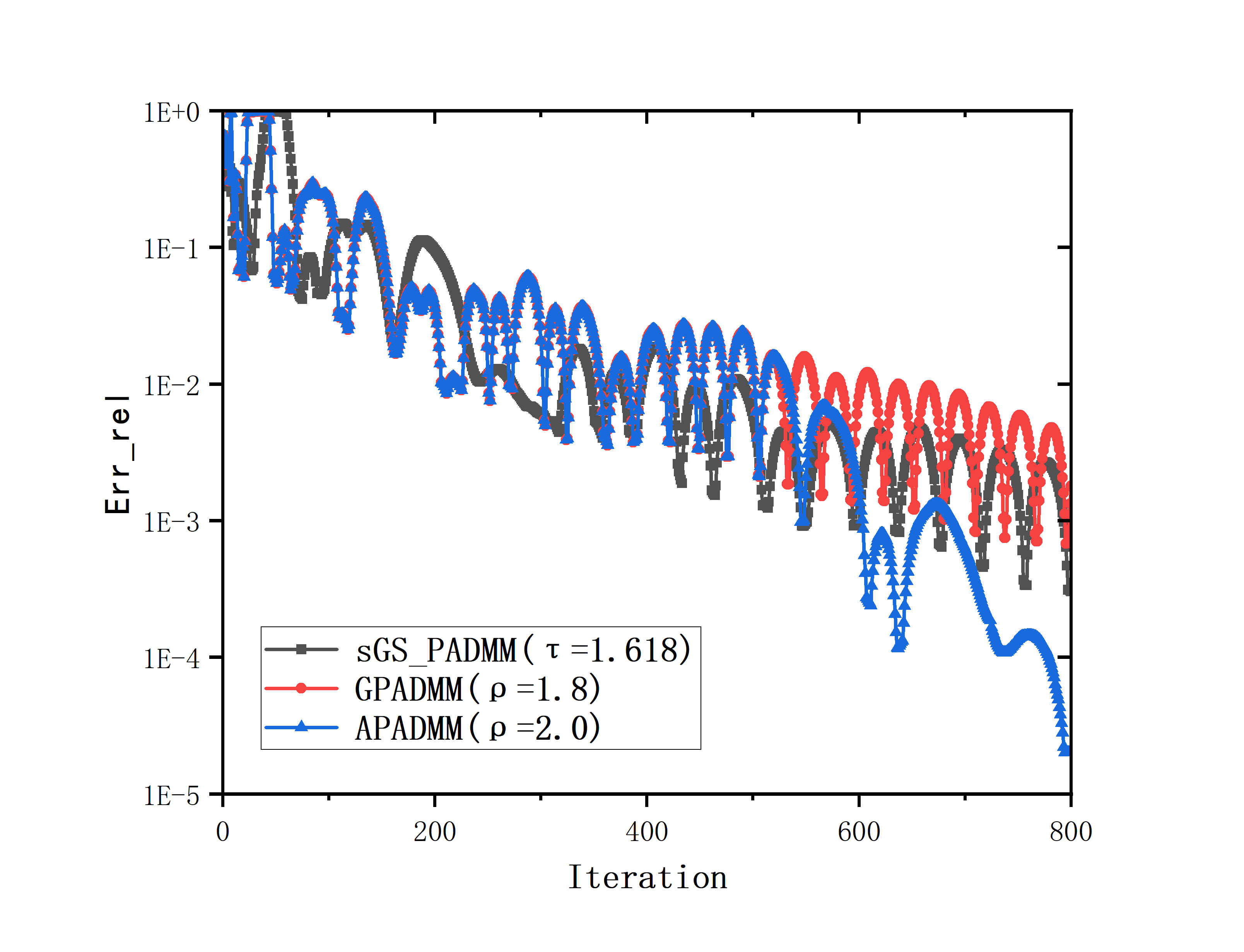}
		\caption{n=12, m=6, M=8}
	\end{subfigure}
	\centering
	\begin{subfigure}{0.325\linewidth}
		\centering
		\includegraphics[width=1.1\linewidth]{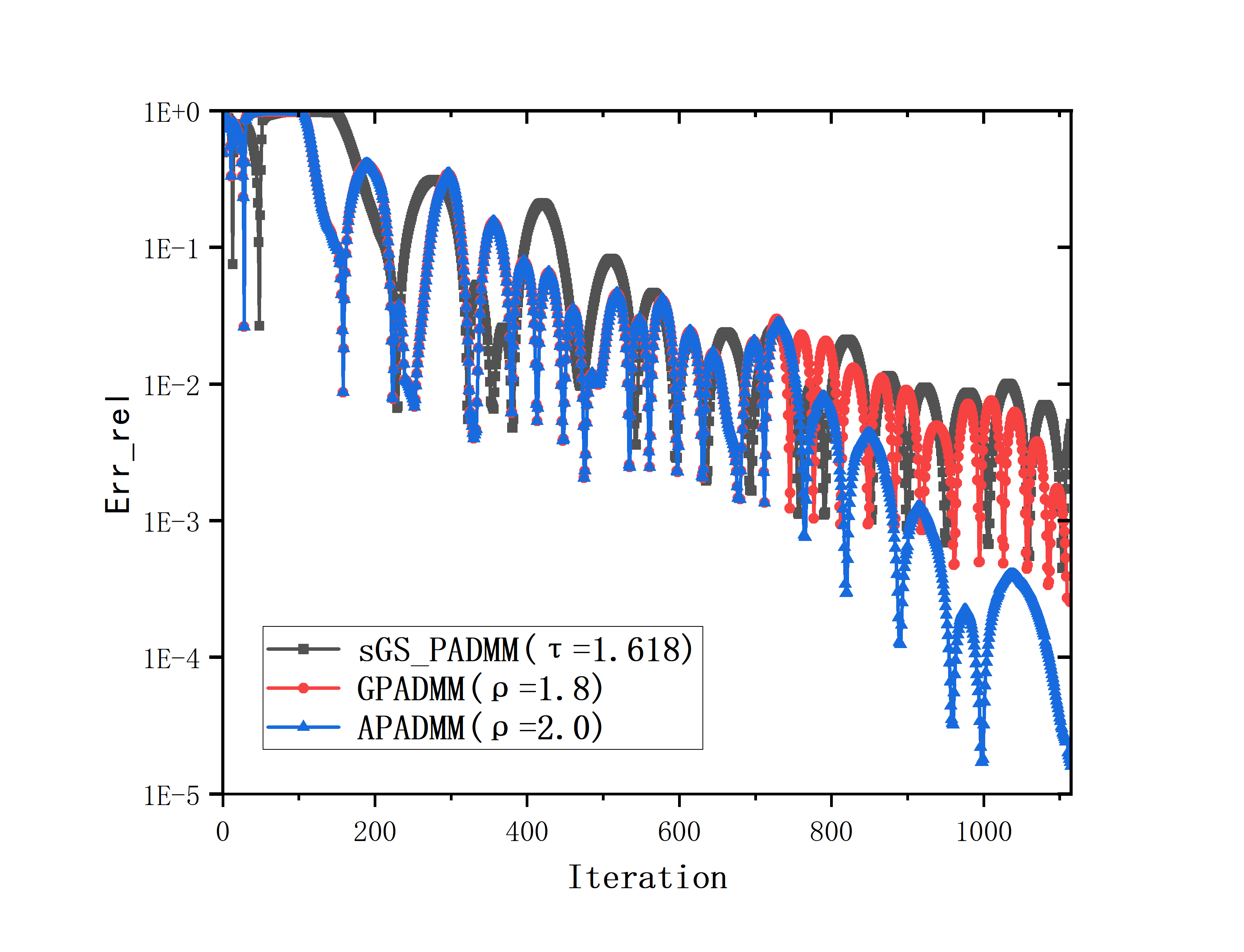}
		\caption{n=15, m=15, M=10}
		% \label{chutian2}
	\end{subfigure}
	\centering
	\begin{subfigure}{0.325\linewidth}
		\centering
		\includegraphics[width=1.1\linewidth]{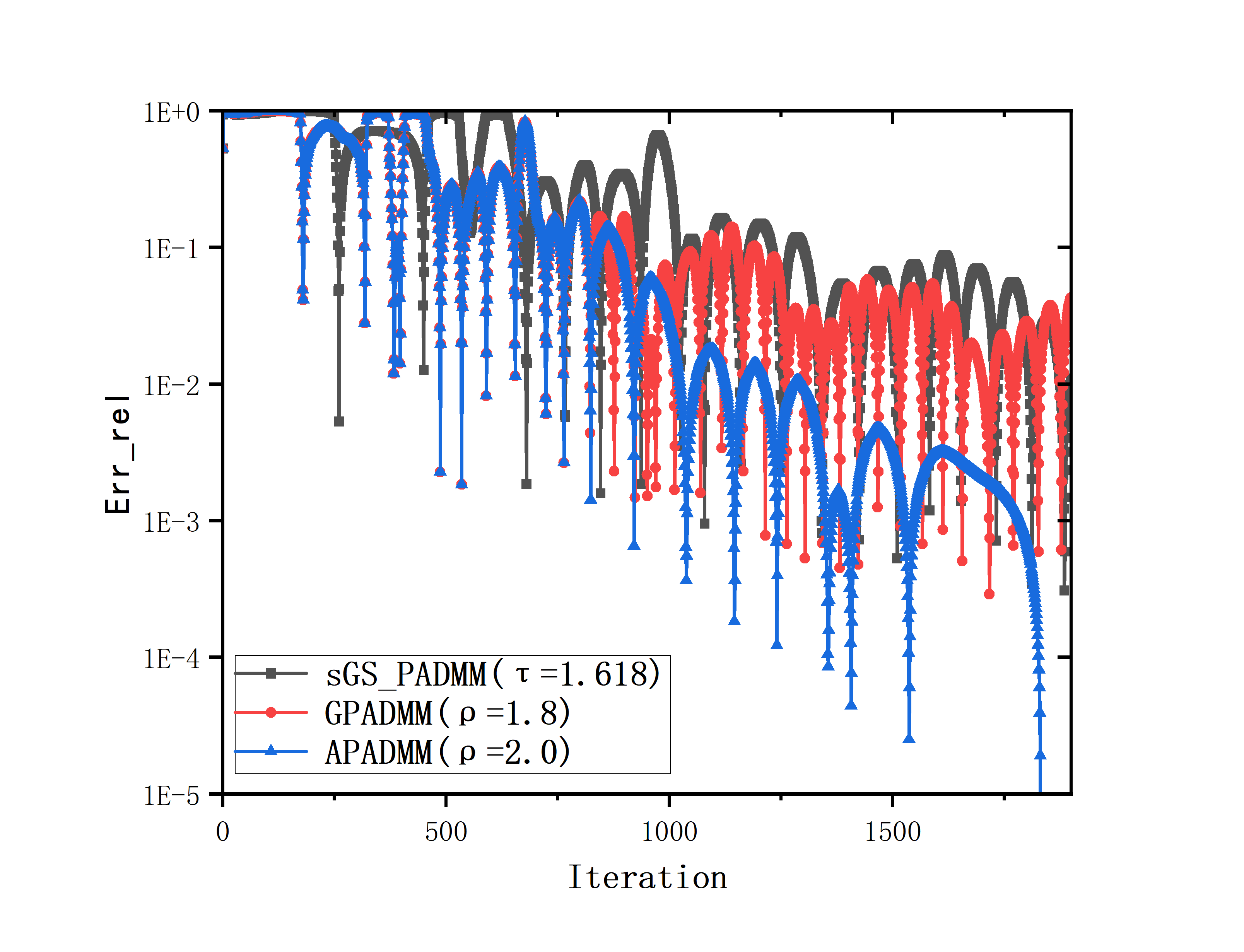}
		\caption{n=20, m=15, M=10}
	\end{subfigure}
	\centering
	\begin{subfigure}{0.325\linewidth}
		\centering
		\includegraphics[width=1.1\linewidth]{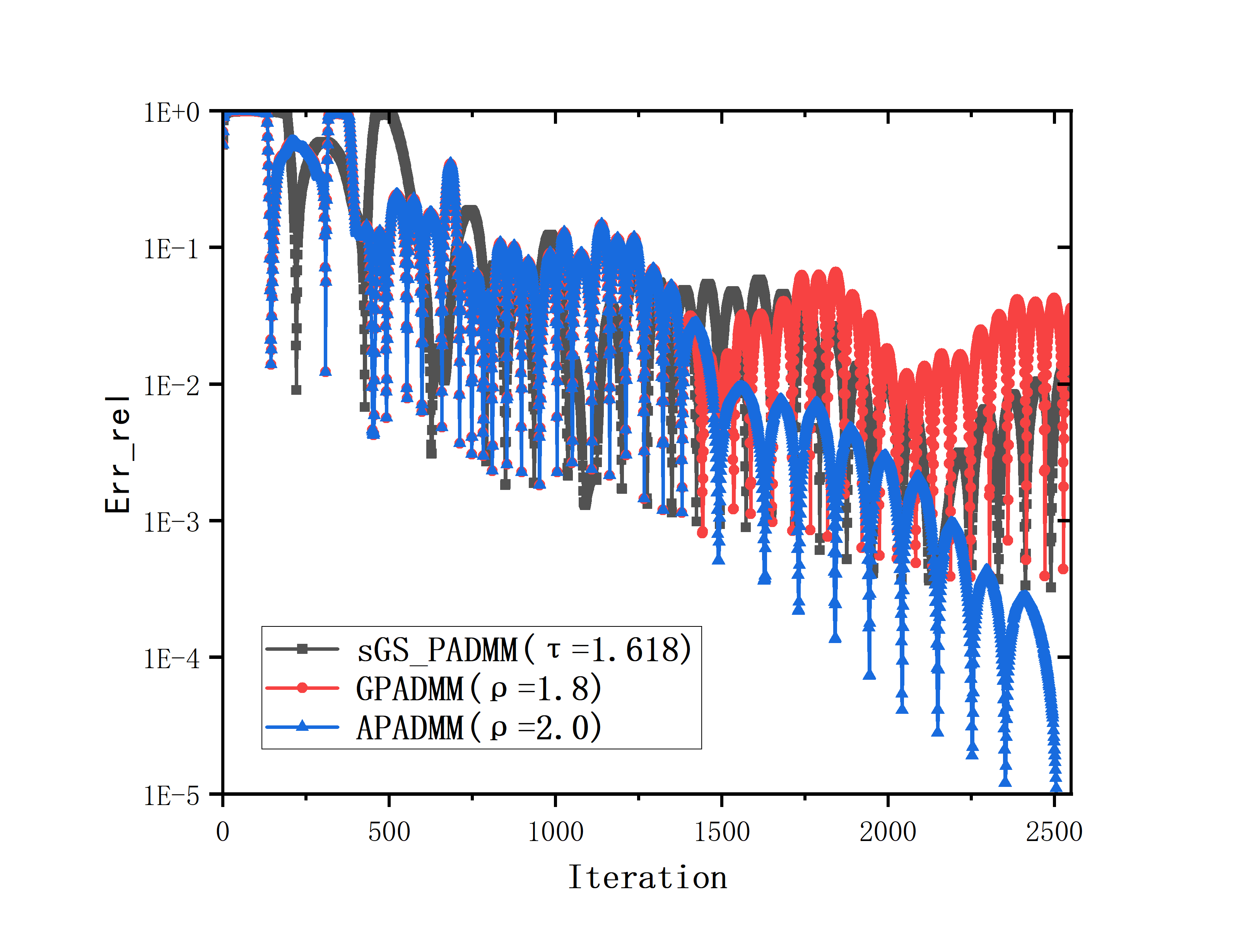}
		\caption{n=30, m=20, M=15}
	\end{subfigure}
\caption{The performance comparison of APADMM, sGS\_PADMM, and GPADMM.}\label{figp2}
\end{figure}
To delve deeper into the impact of different proximal operators on acceleration, we summarized some numerical results of APADMM and PADMM in Table \ref{Tabsum2}, and specific experimental results for each example can be found in Table \ref{Tabacc4}. Here, ``TB" denotes the proximal operator $\tilde{\mathcal{S}}_0$, and ``sGS" denotes the proximal operator $\textrm{sGS}({\mathcal{Q}})+ \tilde{\mathcal{S}}'_0$. As illustrated in Table \ref{Tabsum2}, sGS\_APADMM demonstrates an average iteration step improvement of {\bf{46.268}$\%$} and reduces the average time by {\bf{47.378}$\%$} compared to sGS\_GPADMM. Additionally, TB\_APADMM shows an average iteration step improvement of {\bf{25.668}$\%$} and a time reduction of {\bf{26.809}$\%$} compared to TB\_GPADMM. These results emphasize that the acceleration technique significantly enhances the efficiency of PADMM. Moreover, the impact of the acceleration technique on the proximal operator $\textrm{sGS}({\mathcal{Q}})+ \tilde{\mathcal{S}}'_0$ is more pronounced than on the proximal operator $\tilde{\mathcal{S}}_0$. One possible explanation is that the accelerated effect becomes more significant with larger iteration numbers, and PADMM with $\textrm{sGS}({\mathcal{Q}})+ \tilde{\mathcal{S}}'_0$ involves more iterations. 

\begin{table}[]
\caption{The comparison of the acceleration effects on proximal operators $\tilde{\mathcal{S}}_0$ and $\textrm{sGS}({\mathcal{Q}})+ \tilde{\mathcal{S}}'_0$.
}\label{Tabsum2}
	\centering
	\begin{tabular}{cccccccccc}
		\toprule
		\multicolumn{1}{c}{\multirow{2}*{Method}}& \multicolumn{4}{c}{Iteration Number} & \multicolumn{4}{c}{CUP Times (s)} \\
		\cmidrule(lr){2-5}\cmidrule(lr){6-9}
		                & Avgs    & Med.    &  Max.    & Min.    & Avgs     & Med.     &  Max.    & Min. \\
		\midrule
		TB\_GPADMM      & 1706.26 & 1503    & 4193     & 617     & 2.35e+02 &5.91e+01  & 3.85e+03 & 2.44e-01 \\
        TB\_APADMM      & 1268.30 & 1197    & 2604     & 572     & 1.72e+02 &3.66e+01  & 2.38e+03 & 9.67e-02 \\
        sGS\_GPADMM     & 5599.54 & 5152    & 13628    & 2201    & 5.34e+02 &1.30e+02  & 7.86e+03 & 5.20e-01 \\
        sGS\_APADMM     & 3008.76 & 3010    & 6210     & 1298    & 2.81e+02 &8.92e+01  & 3.65e+03 & 2.40e-01 \\ 
		\bottomrule
	\end{tabular}
\end{table}

\begin{sidewaystable}
\begin{centering}
        \caption{Comparison of the acceleration performance of APADMM with different proximal operators for the ODC problems with Err\_rel=1e-5.}\label{Tabacc4}
		\begin{spacing}{1.15}
			\begin{tabular}{|ccc|ccc|ccc|ccc|ccc|}
			\hline
\multirow{2}{*}{\makecell[{}{p{0.2cm}}]{\textbf{n}}}&\multirow{2}{*}{\makecell[{}{p{0.2cm}}]{\textbf{m}}}&\multirow{2}{*}{\makecell[{}{p{0.2cm}}]{\textbf{M}}} &\multicolumn{3}{c|}{{TB\_GPADMM}}&\multicolumn{3}{c|}{{TB\_APADMM}}&\multicolumn{3}{c|}{{sGS\_GPADMM}}&\multicolumn{3}{c|}{{sGS\_APADMM}}\\
                \cmidrule(r){4-15}
                &&&\makecell[{}{p{0.5cm}}]{Iter}& \makecell[{}{p{1.3cm}}]{Err\_rel}& \makecell[{}{p{1.3cm}}]{Time (s)}
                &\makecell[{}{p{0.5cm}}]{Iter}  & \makecell[{}{p{1.3cm}}]{Err\_rel}& \makecell[{}{p{1.3cm}}]{Time (s)}
                &\makecell[{}{p{0.5cm}}]{Iter}  & \makecell[{}{p{1.3cm}}]{Err\_rel}& \makecell[{}{p{1.3cm}}]{Time (s)}
                &\makecell[{}{p{0.5cm}}]{Iter}  & \makecell[{}{p{1.3cm}}]{Err\_rel}& \makecell[{}{p{1.3cm}}]{Time (s)}\\
				\hline
                 10  &10  &5  &2674&9.807e-06&4.40e-01 &692 &9.827e-06&9.67e-02 &2837 &9.110e-06&5.20e-01 &1320 &8.900e-06&2.40e-01\\
                 10  &5   &10 &1167&4.019e-06&2.44e-01 &900 &9.936e-06&1.86e-01 &4245 &9.368e-06&8.66e-01 &1298 &9.948e-06&2.45e-01\\
			     15  &10  &10 &1639&4.267e-06&6.52e-01 &704 &7.701e-06&3.07e-01 &4654 &7.529e-06&2.27e+00 &3080 &9.879e-06&1.45e+00\\
                 15  &15  &10 &948 &5.776e-06&4.05e-01 &1422&8.151e-06&6.41e-01 &2231 &9.488e-06&1.19e+00 &1927 &9.940e-06&9.82e-01\\
                 20  &10  &10 &2489&9.500e-06&1.51e+00 &887 &7.659e-06&5.23e-01 &4832 &9.440e-06&3.15e+00 &1416 &8.511e-06&9.06e-01\\
                 20  &15  &10 &980 &9.033e-06&9.36e-01 &600 &5.024e-06&5.76e-01 &4612 &9.301e-06&4.36e+00 &1561 &9.957e-06&1.50e+00\\
                 25  &20  &15 &2753&9.123e-06&4.61e+00 &1466&9.570e-06&2.19e+00 &5778 &1.285e-06&8.80e+00 &2597 &9.586e-06&3.99e+00\\
                 30  &20  &10 &676 &5.876e-06&1.77e+00 &755 &4.145e-06&1.73e+00 &6752 &9.414e-06&1.39e+01 &3741 &6.331e-06&7.97e+00\\
                 30  &20  &15 &617 &6.577e-06&1.28e+00 &572 &9.101e-06&9.99e-01 &5543 &2.885e-06&9.19e+00 &1964 &9.097e-06&3.32e+00\\
                 35  &20  &10 &1499&9.355e-06&4.03e+00 &961 &9.865e-06&2.46e+00 &5321 &5.865e-06&1.42e+01 &3575 &9.896e-06&8.54e+00\\
                 35  &25  &15 &1530&4.502e-06&6.54e+00 &1144&8.773e-06&4.41e+00 &8117 &7.063e-06&3.28e+01 &6210 &5.171e-06&2.43e+01\\
                 35  &35  &10 &689 &8.391e-06&2.95e+00 &827 &8.409e-06&3.13e+00 &7262 &4.548e-06&2.78e+01 &2752 &4.963e-06&1.12e+01\\
                 40  &10  &10 &747 &9.002e-06&2.04e+00 &834 &9.492e-06&2.11e+00 &3200 &1.250e-06&8.43e+00 &1381 &9.404e-06&4.50e+00\\
                 40  &20  &10 &620 &9.800e-06&2.06e+00 &704 &9.663e-06&2.10e+00 &4173 &2.117e-06&1.36e+01 &1656 &6.838e-06&6.18e+00\\
                 40  &20  &15 &1503&8.902e-06&8.05e+00 &1029&8.648e-06&5.10e+00 &10477&1.743e-06&4.75e+01 &3801 &8.420e-06&2.11e+01\\
                 40  &20  &20 &1154&9.547e-06&7.89e+00 &1242&9.883e-06&7.47e+00 &7689 &3.400e-06&4.84e+01 &2575 &5.132e-06&1.62e+01\\
                 40  &25  &15 &2176&9.616e-06&1.28e+01 &2089&9.994e-06&1.13e+01 &13628&5.546e-06&6.99e+01 &5249 &6.896e-06&3.13e+01\\
                 50  &20  &10 &1551&9.444e-06&1.11e+01 &1197&9.954e-06&2.26e+01 &6611 &5.208e-06&4.61e+01 &2855 &9.896e-06&2.02e+01\\
                 60  &30  &10 &1235&7.927e-06&1.65e+01 &1831&6.135e-06&2.05e+01 &5199 &7.484e-06&6.59e+01 &2898 &8.928e-06&3.75e+01\\
                 60  &35  &10 &974 &9.119e-06&1.57e+01 &1120&9.896e-06&1.63e+01 &7666 &1.148e-06&1.21e+02 &4524 &8.759e-06&7.06e+01\\
                 70  &10  &30 &1973&3.783e-06&5.91e+01 &1623&4.046e-06&5.04e+01 &11353&7.577e-06&3.47e+02 &4069 &4.447e-06&1.18e+02\\
                 80  &10  &10 &879 &7.636e-06&1.87e+01 &910 &7.944e-06&1.81e+01 &3991 &2.497e-06&7.89e+01 &3433 &4.377e-06&1.63e+02\\
                \hline
			\end{tabular}
		\end{spacing}
	\end{centering}
\end{sidewaystable}

\begin{sidewaystable}
\centering
		\begin{spacing}{1.15}
			\begin{tabular}{|ccc|ccc|ccc|ccc|ccc|}
			\hline
\multirow{2}{*}{\makecell[{}{p{0.2cm}}]{\textbf{n}}}&\multirow{2}{*}{\makecell[{}{p{0.2cm}}]{\textbf{m}}}&\multirow{2}{*}{\makecell[{}{p{0.2cm}}]{\textbf{M}}} &\multicolumn{3}{c|}{{TB\_GPADMM}}&\multicolumn{3}{c|}{{TB\_APADMM}}&\multicolumn{3}{c|}{{sGS\_GPADMM}}&\multicolumn{3}{c|}{{sGS\_APADMM}}\\
                \cmidrule(r){4-15}
                &&&\makecell[{}{p{0.5cm}}]{Iter}& \makecell[{}{p{1.3cm}}]{Err\_rel}& \makecell[{}{p{1.3cm}}]{Time (s)}
                &\makecell[{}{p{0.5cm}}]{Iter}  & \makecell[{}{p{1.3cm}}]{Err\_rel}& \makecell[{}{p{1.3cm}}]{Time (s)}
                &\makecell[{}{p{0.5cm}}]{Iter}  & \makecell[{}{p{1.3cm}}]{Err\_rel}& \makecell[{}{p{1.3cm}}]{Time (s)}
                &\makecell[{}{p{0.5cm}}]{Iter}  & \makecell[{}{p{1.3cm}}]{Err\_rel}& \makecell[{}{p{1.3cm}}]{Time (s)}\\
                \hline
                100  &6  &8   &843 &4.347e-06&2.85e+01 &1104&9.990e-06&3.37e+01 &3394&8.479e-06&9.33e+01  &2104&9.525e-06&6.40e+01\\
                110  &3  &9   &2037&7.834e-06&9.49e+01 &1186&9.135e-06&5.22e+01 &6366&4.250e-06&2.39e+02  &3448&6.028e-06&1.45e+02\\
                115  &4  &8   &1534&5.638e-06&7.92e+01 &1107&9.633e-06&5.51e+01 &3968&9.229e-06&1.69e+02  &4058&4.513e-06&1.84e+02\\
                120  &2  &4   &1655&9.518e-06&6.54e+01 &975 &8.984e-06&3.66e+01 &3969&8.384e-06&1.30e+02  &2438&6.211e-06&8.80e+01\\
                120  &2  &5   &2909&9.284e-06&1.23e+02 &1599&8.804e-06&6.76e+01 &3891&7.784e-06&1.44e+02  &2343&8.654e-06&8.92e+01\\
                120  &6  &10  &1320&7.219e-06&8.06e+01 &1499&9.713e-06&9.13e+01 &4793&8.541e-06&2.52e+02  &3260&3.264e-06&1.76e+02\\
                125  &3  &9   &1667&6.999e-06&1.06e+02 &1649&8.362e-06&1.06e+02 &5152&6.185e-06&2.94e+02  &4373&4.220e-06&2.71e+02\\
                125  &4  &8   &1645&6.946e-06&1.02e+02 &1376&6.567e-06&8.73e+01 &4580&7.452e-06&2.54e+02  &3295&8.380e-06&1.86e+02\\
                125  &8  &10  &1300&9.097e-06&9.79e+01 &1477&9.526e-06&1.11e+02 &6139&9.097e-06&4.05e+02  &3135&6.535e-06&2.11e+02\\
                130  &2  &5   &4193&8.088e-06&2.25e+02 &1895&5.067e-06&1.02e+02 &8307&9.648e-06&4.01e+02  &3694&4.138e-06&1.82e+02\\
                130  &3  &9   &2007&5.607e-06&1.44e+02 &1512&7.701e-06&1.12e+02 &5274&5.579e-06&3.38e+02  &3217&9.377e-06&2.12e+02\\
                130  &6  &8   &1210&9.863e-06&9.09e+01 &1297&8.555e-06&9.84e+01 &4832&7.171e-06&3.11e+02  &2588&7.735e-06&1.73e+02\\
                135  &8  &10  &1368&9.240e-06&1.32e+02 &1481&9.772e-06&1.42e+02 &5498&8.629e-06&4.56e+02  &3639&4.945e-06&3.09e+02\\
                140  &2  &5   &2414&8.395e-06&1.71e+02 &1602&8.484e-06&1.17e+02 &4236&9.980e-06&2.67e+02  &3010&7.989e-06&1.94e+02\\ 
                140  &7  &10  &1225&9.203e-06&1.23e+02 &1361&9.372e-06&1.40e+02 &5295&9.403e-06&4.60e+02  &3670&7.849e-06&3.48e+02\\
                145  &2  &4   &2181&4.441e-06&1.60e+02 &1264&7.132e-06&9.43e+01 &3557&7.884e-06&2.32e+02  &2715&6.599e-06&1.93e+02\\
                145  &4  &8   &1446&2.780e-06&1.46e+02 &1246&7.344e-06&1.29e+02 &4668&7.445e-06&4.13e+02  &3958&7.208e-06&3.57e+02\\
                165  &3  &9   &2843&5.058e-06&4.37e+02 &2088&7.901e-06&3.24e+02 &5809&8.871e-06&8.03e+02  &3913&9.992e-06&5.71e+02\\
                185  &2  &4   &2237&7.741e-06&3.80e+02 &1143&9.136e-06&2.08e+02 &5135&8.788e-06&7.71e+02  &1684&9.096e-06&2.91e+02\\
                195  &2  &4   &2476&7.627e-06&5.01e+02 &1417&6.029e-06&3.03e+02 &5205&9.923e-06&9.54e+02  &3651&8.626e-06&7.25e+02\\
                215  &2  &4   &1479&9.923e-06&5.47e+02 &742 &1.035e-06&3.11e+02 &2972&6.812e-06&8.33e+02  &1339&7.248e-06&4.39e+02\\
                225  &2  &4   &1115&9.901e-06&4.42e+02 &827 &9.994e-06&3.58e+02 &2201&9.445e-06&7.98e+02  &2385&9.926e-06&8.77e+02\\
                275  &3  &9   &2949&5.388e-06&2.51e+03 &2604&5.643e-06&2.29e+03 &8069&8.693e-06&6.73e+03  &4329&9.324e-06&3.65e+03\\
                300  &2  &4   &3962&7.933e-06&3.85e+03 &2382&7.942e-06&2.38e+03 &8098&9.126e-06&7.86e+03  &2275&4.454e-06&2.42e+03\\  
                \hline
			\end{tabular}
		\end{spacing}
\end{sidewaystable}

\subsection{Numerical results for APADMM in comparison with COSMO, MOSEK, and SCS}

In this subsection, we compare APADMM with COSMO, MOSEK (the interior point method), and SCS in ODC problems of different scales. All solvers will be terminated with a tolerance of 1e-4. In the following tests, the maximum number of iterations is set to 25000 for APADMM, SCS, COSMO, and MOSEK for small and medium-scale problems, and to 10000 for large-scale problems. We summarize the results of small-scale, medium-scale, and large-scale problems in Table \ref{small}, \ref{medium}, and \ref{large} respectively, where the status \verb"max_iter" refers to the algorithm stopping due to reaching the maximum iteration steps, \verb"solved" means that the solver has successfully solved the ODC problems to a given accuracy, and \verb"memory error" indicates that the solver has reached its memory limits, resulting in an error. Moreover, the term ``obj\_bias" represents the absolute difference between the objective function values obtained by each algorithm and that of MOSEK. Additionally, ``p\_abs" and ``d\_abs" denote the absolute residual error, while ``p\_res" and "d\_res" represent the relative residual error (refer to SCS\footnote{\url{https://www.cvxgrp.org/scs/index.html}}, COSMO\footnote{\url{https://oxfordcontrol.github.io/COSMO.jl/stable/}}, and MOSEK\footnote{\url{https://docs.mosek.com/latest/faq/index.html}} for details on the stopping criteria settings).

\quad The numerical results in Tables \ref{small}, \ref{medium}, and \ref{large} reveal the following observations: (1) APADMM is the only algorithm capable of returning solutions within the given accuracy and the maximal iteration for problems of different scales. (2) In comparison with the second-order method MOSEK, while MOSEK quickly provides a solution for small-scale problems (as seen in Table \ref{small}), its computational time increases significantly with the problem size. For the medium-scale problem in Table \ref{medium}, APADMM outperforms MOSEK in terms of computation time. Furthermore, as indicated in Table \ref{large}, MOSEK encounters memory errors when attempting to solve large-scale problems. (3) In contrast to the first-order methods SCS and COSMO, APADMM exhibits superior performance in terms of iteration number and computation time. As seen in Table \ref{small}, COSMO fails to return a solution satisfying the given accuracy within the maximum iteration for small-scale problems, and SCS faces the same challenge for medium and large-scale problems.

\begin{table}[!htbp]
\centering
\caption{The performance of \textbf{MOSEK}, \textbf{COSMO}, \textbf{SCS}, and \textbf{APADMM} for solving small-scale ODC problems.}\label{small}
\begin{tabular}{cccccccc}
\toprule   
Size of LTI system               &n=7,m=4       &n=8,m=6           &n=9,m=6            &n=10,m=3           &n=15,m=2            &n=24,m=6         \\
Number of vertices               &M=5           &M=8               &M=8                &M=6                &M=5                 &M=8              \\
Size of variable $\xi$           &178           &354               &438                &382                &669                 &2758             \\
Number of constraints            &206           &393               &480                &421                &753                 &2865             \\
Nonzeos in $\mathcal{\tilde{A}}^{\ast}$  &1213          &3226              &4302               &2716               &8244                &57694    \\
      \bottomrule
      \textbf{MOSEK}\\
      \quad p\_res             &2.5e-10          &2.8e-12           &8.1e-11            &5.4e-10            &2.9e-14             &1.5e-13          \\
      \quad d\_res             &6.0e-10          &5.6e-09           &8.0e-11            &8.9e-09            &1.5e-07             &5.4e-09          \\
      \quad$\mathrm{\eta_{gap}}$&1.7e-13         &1.7e-15           &4.8e-14            &4.4e-13            &1.5e-18             &1.5e-16          \\
      \quad iter               &11               &15                &14                 &12                 &17                  &16               \\
      \quad p\_obj             &8.1533           &4.6586            &4.1229             &17.2072            &1.6019              &2.5349           \\
      \quad d\_obj             &8.1533           &4.6586            &4.1229             &17.2072            &1.6019              &2.5349           \\
      \quad total time (s)      &1.500e-02        &1.600e-02         &3.100e-02          &3.100-02           &4.600e-02           &1.570e-01        \\
      \quad status             &$\verb"solved"$  &$\verb"solved"$   &$\verb"solved"$    &$\verb"solved"$    &$\verb"solved"$     &$\verb"solved"$  \\ 
      \hline   
      \textbf{COSMO(direct)}\\               
      \quad p\_res             &3.898e+00        &6.307e+00         &4.149e+00          &2.023e+00          &3.124e+00           &1.303e+01       \\
      \quad d\_res             &4.521e-01        &1.378e-02         &7.739e-02          &1.256e-02          &7.281e-03           &7.496e-01       \\
      \quad iter               &25000            &25000             &25000              &25000              &25000               &25000            \\
      \quad total time (s)      &8.154e+00        &1.161e+01         &2.030e+01          &2.825e+01          &3.480e+01           &1.213e+02        \\
      \quad status             &$\verb"max_iter"$&$\verb"max_iter"$ &$\verb"max_iter"$  &$\verb"max_iter"$  &$\verb"max_iter"$   &$\verb"max_iter"$\\  
      \hline                                           
    \textbf{SCS(direct)}\\
      \quad p\_abs             &1.164e-03        &1.856e-03         &6.250e-04          &8.367e-04          &1.684e-04           &1.956e-04        \\
      \quad d\_abs             &8.722e-06        &8.023e-05         &1.435e-05          &1.342e-04          &2.168e-06           &1.715e-06        \\
      \quad$\mathrm{\eta_{gap}}$&2.903e-04       &1.232e-04         &5.084e-04          &4.368e-04          &2.593e-04           &3.522e-04        \\
      \quad iter               &1200             &1100              &3750               &2800               &3050                &9625             \\
      \quad objective          &8.1531           &4.6586            &4.1225             &17.2076            &1.6013              &2.5338           \\
      \quad total time (s)      &6.79e-02         &1.11e-01          &3.84e-01           &2.79e-01           &5.06e-01            &4.96e+00         \\
      \quad obj\_bias          &1.98e-04         &2.5e-05           &4.37e-04           &3.87e-04           &6.18e-04            &1.11e-03         \\
      \quad status             &$\verb"solved"$  &$\verb"solved"$   &$\verb"solved"$    &$\verb"solved"$    &$\verb"solved"$     &$\verb"solved"$  \\ 
      \hline   
    \textbf{APADMM}\\ 
      \quad p\_abs             &1.131e-03        &2.097e-04         &1.580e-04          &8.211e-05          &9.063e-04           &2.666e-04        \\
      \quad d\_abs             &3.994e-05        &7.380e-06         &1.996e-05          &5.227e-05          &3.549e-05           &3.155e-05        \\
      \quad$\mathrm{\eta_{gap}}$&1.344e-03       &8.472e-04        &9.207e-04          &3.491e-02          &2.143e-04           &4.630e-04        \\
      \quad iter               &220              &204               &250                &225                &223                 &356              \\
      \quad p\_obj             &8.1534           &4.6585            &4.1228             &17.2073            &1.6010              &2.5341           \\
      \quad d\_obj             &8.1548           &4.6599            &4.12188            &17.2108            &1.6008              &2.5336           \\
      \quad total time (s)      &1.51e-02         &2.02e-02          &3.20e-02           &3.37e-02           &6.08e-02            &2.50e-01         \\
      \quad obj\_bias          &1.33e-04         &3.40e-05          &1.40e-04           &6.00e-05           &8.48e-04            &8.44e-04         \\
      \quad status             &$\verb"solved"$  &$\verb"solved"$   &$\verb"solved"$    &$\verb"solved"$    &$\verb"solved"$     &$\verb"solved"$  \\
\bottomrule
\end{tabular}
\end{table}
\begin{table}[!htbp]
\centering
\caption{The performance of \textbf{MOSEK}, \textbf{COSMO}, \textbf{SCS}, and \textbf{APADMM} for solving medium-scale ODC problems.}\label{medium}
\begin{tabular}{cccccccc}
\toprule
Size of LTI system               &n=40,m=6           &n=60,m=5           &n=80,m=6           &n=100,m=8          &n=120,m=8           &n=150,m=8     \\     
Number of vertices               &M=8                &M=6                &M=8                &M=10               &M=8                 &M=10               \\
Size of variable $\xi$           &7421               &12653              &28974              &55533              &79698               &124094             \\
Number of constraints            &7641               &13125              &29661              &56386              &80856               &125811             \\
Nonzeos in $\mathcal{\tilde{A}}^{\ast}$  &241981             &571817             &1786744            &4367733            &7445888             &14325184           \\                             
      \bottomrule
      \textbf{MOSEK}\\
      \quad p\_res               &6.2e-13            &1.4e-14            &1.5e-13            &3.1e-14            &7.8e-15             &7.4e-14            \\
      \quad d\_res               &6.9e-09            &2.8e-08            &8.08e-08           &1.0e-08            &3.1e-09             &1.7e-06            \\
      \quad $\mathrm{\eta_{gap}}$ &1.8e-15            &8.5e-18           &1.5e-15            &1.5e-16            &4.7e-17             &1.7e-17            \\
      \quad iter                 &15                 &16                 &19                 &17                 &20                  &22                 \\
      \quad p\_obj               &48.2255            &6.1406             &1.6705             &48.4216            &31.2284             &3.87385            \\
      \quad d\_obj               &48.2255            &6.1406             &1.6705             &48.4216            &31.2284             &3.87385            \\
      \quad total time (s)        &8.430e-01          &3.016e+00          &1.550e+01          &6.242e+01          &2.568e+02           &4.436e+02          \\
      \quad status               &$\verb"solved"$    &$\verb"solved"$    &$\verb"solved"$    &$\verb"solved"$    &$\verb"solved"$     &$\verb"solved"$             \\\hline
      \textbf{COSMO(direct)}\\              
      \quad p\_res               &2.654e+01          &2.096e+02          &3.625e+01          &9.763e+01          &1.395e+02           &1.153e+02        \\
      \quad d\_res               &6.557e-02          &3.095e-02          &5.057e-02          &3.114e-01          &2.108e+00           &1.301e-01        \\
      \quad iter                 &25000              &25000              &25000              &25000              &25000               &25000             \\
      \quad total time (s)        &3.214e+02          &7.453e+02          &1.654e+03          &3.845e+03          &7.138e+03           &9.694e+03         \\
      \quad status               &$\verb"max_iter"$  &$\verb"max_iter"$  &$\verb"max_iter"$  &$\verb"max_iter"$  &$\verb"max_iter"$   &$\verb"max_iter"$         \\ \hline                                            
    \textbf{SCS(direct)}\\
      \quad p\_abs               &1.206e-03          &6.402e-04          &3.157e-03          &1.012e-02          &3.462e-03           &5.152e-03         \\
      \quad d\_abs               &7.394e-05          &3.510e-06          &8.599e-06          &4.450e-05          &3.265e-06           &1.578e-05         \\
      \quad $\mathrm{\eta_{gap}}$&4.483e-03          &2.727e-03          &1.756e-02          &2.171e-01          &4.730e-02           &1.388e-01         \\
      \quad iter                 &10225              &25000              &25000              &25000              &25000               &25000             \\
      \quad objective            &48.2183            &6.1334             &1.6381             &47.8352            &31.0848             &3.6930            \\
      \quad total time (s)        &1.78e+01           &1.13e+02           &3.27e+02           &7.10e+02           &1.22e+03            &2.50e+03          \\
      \quad obj\_bias            &7.19e-03           &7.15e-03           &3.24e-02           &5.86e-01           &1.44e-01            &1.81e-01          \\
      \quad status               &$\verb"solved"$    &$\verb"max_iter"$  &$\verb"max_iter"$  &$\verb"max_iter"$  &$\verb"max_iter"$   &$\verb"max_iter"$ \\ \hline  
    \textbf{APADMM}\\
      \quad p\_abs               &8.657e-04          &2.106e-03          &3.688e-03          &3.125e-03          &3.396e-03           &3.418e-03         \\
      \quad d\_abs               &1.380e-05          &3.738e-05          &1.342e-05          &3.961e-06          &1.422e-06           &2.815e-06         \\
      \quad $\mathrm{\eta_{gap}}$&9.705e-03          &8.180e-04          &3.963e-04          &9.708e-03          &6.060e-03           &1.495e-04         \\
      \quad iter                 &816                &433                &418                &1138               &1075                &793               \\
      \quad p\_obj               &48.2258            &6.1354             &1.6591             &48.4200            &31.2178             &3.8623            \\
      \quad d\_obj               &48.2161            &6.1346             &1.6586             &48.4103            &31.2118             &3.8624            \\
      \quad total time (s)        &1.68e+00           &2.77e+00           &6.75e+00           &4.31e+01           &7.15e+01            &1.09e+02          \\
      \quad obj\_bias            &3.20e-04           &5.19e-03           &1.15e-02           &1.57e-03           &1.06e-02            &1.16e-02          \\
      \quad status               &$\verb"solved"$    &$\verb"solved"$    &$\verb"solved"$    &$\verb"solved"$    &$\verb"solved"$     &$\verb"solved"$          \\
\bottomrule
\end{tabular}
\end{table}

\begin{table}[!htbp]
\centering
\caption{The performance of \textbf{MOSEK}, \textbf{SCS}, and \textbf{APADMM} for solving large-scale ODC problems.}\label{large}
\begin{tabular}{cccccccc}
\toprule
Size of LTI system              &n=300, m=4          &n=350,m=4         &n=400, m=5          &n=480, m=3                              \\
Number of vertices              &M=6                 &M=6               &M=6                 &M=4                                          \\
Size of variable $\xi$          &305515              &415336            &546696              &539463                                       \\
Number of constraints           &317260              &431385            &563415              &578646                                       \\
Nonzeos in $\mathcal{\tilde{A}}^{\ast}$ &65652859    &104442568         &155504316           &178508123                                    \\
\bottomrule
    \textbf{MOSEK}\\ 
      \quad p\_res              &8.6e-14             &1.1e-13           &1.2e-14             &***                                          \\
      \quad d\_res              &5.1e-07             &4.4e-06           &2.0e-07             &***                                          \\
      \quad $\mathrm{\eta_{gap}}$&9.2e-15            &4.1e-16           &4.0e-15             &***                                          \\
      \quad iter                &21                  &24                &21                  &***                                          \\
      \quad p\_obj              &38.1333             &6.8503            &103.9257            &***                                          \\
      \quad d\_obj              &38.1333             &6.8503            &103.9257            &***                                          \\
      \quad total time (s)       &1.385e+04           &3.574e+04         &1.619e+05           &***                                          \\
      \quad status              &$\verb"solved"$     &$\verb"solved"$   &$\verb"solved"$     &$\verb"memory error"$         \\
      \hline                                     
    \textbf{SCS(direct)}\\
      \quad p\_abs              &6.748e-03           &2.640e-03         &1.053e-02           &5.524e-03                                    \\
      \quad d\_abs              &1.410e-05           &5.786e-06         &2.646e-05           &9.562e-06                                    \\
      \quad$\mathrm{\eta_{gap}}$ &3.904e-01          &1.345e-01         &5.251e-01           &2.594e-01                                    \\
      \quad iter                &10000               &10000             &10000               &10000                                        \\
      \quad objective           &37.5071             &6.5015            &102.6840            &57.3878                                     \\
      \quad total time (s)       &9.27e+03            &1.67e+04          &2.84e+04            &5.94e+04                                     \\
      \quad obj\_bias           &6.26e-01            &3.49e-01          &1.24e-01            &-                                            \\
      \quad status              &$\verb"max_iter"$   &$\verb"max_iter"$ &$\verb"max_iter"$   &$\verb"max_iter"$          \\
      \hline
    \textbf{APADMM}\\
      \quad p\_abs              &1.090e-02           &6.424e-03         &1.042e-02           &3.216e-02                                    \\
      \quad d\_abs              &4.141e-06           &6.578e-06         &4.701e-06           &7.001e-05                                    \\
      \quad$\mathrm{\eta_{gap}}$&7.543e-03           &6.021e-04         &1.953e-02           &3.092e-03                                    \\
      \quad iter                &1362                &1413              &1683                &1150                                         \\
      \quad p\_obj              &38.0940             &6.7651            &103.9076            &57.8089                                     \\
      \quad d\_obj              &38.0865             &6.7651            &103.8881            &57.8120                                     \\
      \quad total time (s)       &1.54e+03            &2.91e+03          &5.87e+03            &9.18e+03                                     \\
      \quad obj\_bias           &3.93e-02            &8.52e-02          &1.57e-02            &-                                            \\
      \quad status              &$\verb"solved"$     &$\verb"solved"$   &$\verb"solved"$     &$\verb"solved"$                              \\
\bottomrule  
\end{tabular}
\footnotetext{Entries marked *** indicate failure due to memory limitations.}
\end{table}

\section{Conclusions and future work}\label{sec6}

In this paper, we developed an accelerated PADMM to efficiently solve the CP problem, which is the relaxation of the $\mathcal{H}_{2}$-guaranteed cost ODC problem.
Establishing the equivalence between the (generalized) PADMM and the relaxed PPA allows us to accelerate the (generalized) PADMM by using the Halpern fixed-point iteration method, achieving a fast $\mathcal{O}(1/k)$ convergence rate. To enhance the efficiency of Algorithm \ref{alg4}, we employed the lifting technique for solving the PADMM subproblems. Numerical experiments on medium and large-scale CP problems, derived from $\mathcal{H}_{2}$-guaranteed cost ODC problems, demonstrated that the proposed accelerated PADMM outperforms COSMO, MOSEK, and SCS in terms of computation time. Finally, it is important to note that the projections in our algorithm are independent, and we used direct methods for solving the linear systems. In future work, we will explore the development of a parallel framework for solving the non-smooth term and incorporate iterative methods for solving the linear systems into our algorithm.

\bibliography{reference}
\bibliographystyle{unsrt}

\newpage
\appendix

\section{The definition of $V_{j1}$ and $V_{j2}$}\label{appxa}

According the definition of the set $\mathcal{W}$ in \eqref{Wset}, $W_{1}$ and $W_{2}$ should be $m$ block-diagonal matrices with $\sum_{i=1}^{m}D_{i}=n$. The coefficient matrix for the equality constraints \eqref{eqvij} can be constructed by utilizing the symmetry of $W_1$ and the sparse structure of $W_2$. Therefore, we define the matrices $\left\{V_i\right\}_{i=1}^{2m}$ to extract the off-diagonal block parts from $W$, and restrict the corresponding positions are zeros. Specifically, for $1\leq i\leq m-1,$ we define matrices $V_{i}$ as follows:
\begin{align}
&V_{1}=\left[
        \begin{array}{c}
          e_{1}^{\top} \\
          \vdots \\
          e_{D_{1}}^{\top} \\
        \end{array}
      \right],~
V_{2}=\left[
         \begin{array}{c}
                e_{D_{1}+1}^{\top} \\
                \vdots \\
                e_{D_{1}+D_{2}}^{\top}\notag\\
              \end{array}
            \right],\ldots,
V_{m-1} =\left[
\begin{array}{c}
e_{D_{1}+\ldots D_{m-2}+1}^{\top}\\
\vdots \\
e_{D_{1}+\ldots D_{m-2}+D_{m-1}}^{\top}\\
\end{array}
\right]\in\mathbb{R}^{D_{m-1}\times p}\notag.
\end{align}
For $i=m,$ the matrix $V_{m}$ is given by
\begin{align}
V_{m}=e_{D_{1}+D_{2}+\ldots+D_{m}}^{\top}=e_{n}^{\top}\in\mathbb{R}^{1\times p}\notag.
\end{align}
For $k=1,\ldots,m,$ $V_{m+k}$ is denoted by the $(n+k)$-th element is $1$ and all other components are $0$, i.e.,
\begin{align}
V_{m+k}=[\underbrace{O_{1\times D_{1}},\ldots,O_{1\times D_{m}}}_{\sum_{i=1}^{m}D_{i}=n},\underbrace{0,\ldots,1}_{n+k},\ldots,0]\in \mathbb{R}^{1\times{p}}\notag.
\end{align}
The block diagonal structure of $W_{1}$ and $W_{2}$ in \eqref{Wset} and the symmetry of $W_{1}$ imply that $N=(3m(m-1))/2$ off-diagonal block submatrices of $W$ need to be zero. Using the defined matrices $\left\{V_i\right\}_{i=1}^{2m}$, we extract the aforementioned off-diagonal blocks and set them to be zero through the following linear constraints:
\begin{align}
&V_{i}WV_{i+1}^{\top}=0,\ldots,V_{i}WV_{i+m-1}^{\top}=0, \notag\\
&V_{i}WV_{i+m+1}^{\top}=0,\ldots,V_{i}WV_{2m}^{\top}=0,\, i=1,\ldots,m-1,\notag
\end{align}
and
\begin{equation*}
V_{m}WV_{m+1}^{\top}=0,\ldots,V_{m}WV_{2m-1}^{\top}=0, \, i=m.
\end{equation*}
{Next, we divide the index array $[1,2,\ldots,\frac{3m(m-1)}{2}]$ into $m$ blocks:
\begin{equation*}
\left[\begin{array}{ccc}
\underbrace{1,2,\ldots}_{1}|\ldots|\underbrace{\ldots}_{i}|\ldots|\underbrace{\ldots,\frac{3m(m-1)}{2}}_{m} \\
\end{array}\right],
\end{equation*}
where for $i$-th block, the starting index is
\begin{align*}
(i-1)(2m-2)-\frac{(i-1)(i-2)}{2}+1,~i=1, \ldots, m.
\end{align*}

According to the above decomposition, we are able to reorder $\left\{V_i\right\}_{i=1}^{2m}$, and construct $V_{j1}$ and $V_{j2}$ for $j=1,\ldots, N$.
Specifically, we define the matrix $V_{j1}$ as follows
\begin{equation*}
V_{j1}=\left\{
\begin{array}{ll}
V_{1},&\mathrm{for}~j=1, 2, \ldots, 2m-2,\\
V_{2},&\mathrm{for}~j=2m-1, \ldots, 4m-5,\\
\vdots&\\
V_{m-1},&\mathrm{for}~j=T+1,~\ldots,~T+m,\\
V_{m},&\mathrm{for}~j=L+1,~\ldots,~L+m-1,
\end{array}
\right.
\end{equation*}
where
\begin{align*}
T&=(m-2)(2m-2)-\frac{(m-2)(m-3)}{2},\\
L&=(m-1)(2m-2)-\frac{(m-1)(m-2)}{2}.
\end{align*}
For each $s(i)=[i+1,~\ldots,~i+(m-1),~i+(m+1),~\ldots,~2m],$ the matrix $V_{j2}$ is defined by
\begin{equation*}
V_{j2}=\left\{
\begin{array}{ll}
V_{s(1)_{j}},  &\mathrm{for}~j=1,~2, \ldots,~2m-2,\\
V_{s(2)_{j-(2m-2)}}, &\mathrm{for}~j=2m-1,~\ldots,~4m-5,\\
\vdots &\\
V_{s(m-1)_{j-T}},  &\mathrm{for}~j=T+1,~\ldots,~T+m,\\
V_{s(m)_{j-L}}, &\mathrm{for}~j=L+1,~\ldots,~L+m-1.
\end{array}
\right.
\end{equation*}} 

\section{The equivalence of the semi-proximal ADMM and the (partial) PPA}\label{appxb}

In this section, we establish an equivalence between the proximal point method and the semi-proximal ADMM, which covers the version of PADMM used in this paper. Consider the following linearly constrained convex optimization problem with separable objective functions:
\begin{equation}
	\begin{array}{ll}
		\underset{x,y}{\mathrm{\min}} &  f(x) + g(y)  \\[5pt]
		\textrm{s.t.}
		& \cF x + \cG y = c,
	\end{array}
	\label{eq-p}
\end{equation}
where $f:\cX\to (-\infty,+\infty]$ and $g:\cY\to (-\infty,+\infty]$ are two proper closed convex functions, $\cF :\cX \to \cZ$ and $\cG:\cX \to\cZ$ are two given linear operators,
$c\in\cZ$ is given data, and $\cX,\cY$ and $\cZ$ are all real finite-dimensional Euclidean spaces, each equipped with an inner product $\inprod{\cdot}{\cdot}$ and its induced norm $\norm{\cdot}.$ The dual problem of \eqref{eq-p} can be written as 
\begin{equation}
	\label{eq-d}
\underset{z}{\mathrm{\max}} \,\{-\inprod{c}{z} - f^*(-\cF^*z) - g^*(-\cG^*z)\}.
\end{equation}
The KKT system associated with \eqref{eq-p} and \eqref{eq-d} is given by
\begin{equation}\label{KKT}
	\left\{
	\begin{aligned}
		0 \in{}&\partial f(x) + \cF^*z, \\[5pt]
		0 \in{}&\partial g(y) + \cG^*z, \\[5pt]
		0 ={}& c - \cF x - \cG y.
	\end{aligned}
	\right.
\end{equation}
Let $\Omega$ be the solution set to the above KKT system. Suppose that $\Omega \not = \emptyset$.
Denote
\[\cT_l (x,y,z) := \left(
\begin{array}{c}
	\partial f(x) +  \cF^*z\\
	\partial g(y) + \cG^*z \\
	c - \cF x- \cG y  \\
\end{array}
\right),
\]
then $\cT_l$ is a maximal monotone operator and $\cT_l^{-1}(0) \not = \emptyset$. Given $\sigma >0$, the augmented Lagrangian function associated with \eqref{eq-p} is given as follows:
\begin{equation*}
	\cL_{\sigma}(x,y;z) = f(x) + g(y)
	+\inprod{z}{\cF x +\cG y -c}
	+ \frac{\sigma}{2}\norm{\cF x +\cG y-c}^2.
	\label{ALM}
\end{equation*}
Given two self-adjoint linear operators $\cS_1:\cX\to\cX$ and $\cS_2:\cY\to\cY$, define a self-adjoint linear operator $\cS:\cX\times\cY\times\cZ\to \cX\times\cY\times\cZ$ as follows
\[\cS := \left(
\begin{array}{ccc}
	\cS_1 &  &  \\
	& \sigma\cG^*\cG+\cS_2 & \cG^*  \\
	& \cG &  \sigma^{-1}\cI \\
\end{array}
\right),
\]
where $\cI$ is the identity operator on $\cZ$. The following proposition establishes the equivalence between the proximal point method and the semi-proximal ADMM.

\begin{prop}\label{prop-B}
	Let $\cS_1$ and $\cS_2$ be two self-adjoint positive semi-definite linear operators such that $\partial f(\cdot)+\sigma\cF^*\cF+\cS_1$ and $\partial g(\cdot)+\sigma\cG^*\cG+\cS_2$ are maximal and strongly monotone, respectively. Consider $(x^0,y^0,z^0)\in\operatorname{dom}(f)\times\operatorname{dom}(g)\times\cZ$ and $\rho\in \mathbb{R}$. Then the point $(x^+,y^+,z^+)$ generated by the following generalized semi-proximal ADMM scheme \eqref{prox-ADMM}
	\begin{equation}
		\label{prox-ADMM}
		\left\{
		\begin{aligned}
			\bx^{+} ={}& \underset{x}{\mathrm{argmin}}~ \cL_{\sigma}(x,y^0;z^0) + \frac{1}{2}\norm{x - x^0}_{\cS_1}^2, \\[5pt]
			\bz^{+} ={}& z^0 + \sigma(\cF \bx^{+} + \cG y^0 - c),\\[5pt]
			\by^{+} ={}& \underset{y}{\mathrm{argmin}}~ \cL_{\sigma}(\bx^{+},y;\bz^{+}) + \frac{1}{2}\norm{y - y^0}_{\cS_2}^2, \\[5pt]
			&\hspace{-1.0cm}(x^{+},y^{+},z^{+}) = (1-\rho)(x^{0},y^{0},z^{0})
			+ \rho (\bx^{+},\by^{+},\bz^{+})
		\end{aligned}
		\right.
	\end{equation}
	is equivalent to the one generated by the following proximal point scheme $\eqref{ppa-2}$
	\begin{equation}\label{ppa-2}
		\left\{\begin{aligned}
			&0\in\cT_l(\bx^{+},\by^{+},\bz^{+}) + \cS(\bx^{+}-x^0,\by^{+}-y^0,\bz^{+}-z^0),
			\\[5pt]
			&(x^{+},y^{+},z^{+}) = (1-\rho)(x^{0},y^{0},z^{0})
			+ \rho (\bx^{+},\by^{+},\bz^{+}).
		\end{aligned}
		\right.
	\end{equation}
\end{prop}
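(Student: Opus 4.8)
The plan is to show that the semi-proximal ADMM iteration, when expanded via the optimality conditions of its two subproblems, produces exactly the inclusion system $0 \in \cT_l(\bx^+,\by^+,\bz^+) + \cS(\bx^+-x^0,\by^+-y^0,\bz^+-z^0)$, since the final over-relaxation step $(x^+,y^+,z^+) = (1-\rho)(x^0,y^0,z^0) + \rho(\bx^+,\by^+,\bz^+)$ is literally identical in both schemes. So the entire content is the equivalence of the "barred" updates.

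**First** I would write down the first-order optimality condition for the $\bx^+$-subproblem. Since $\bx^+$ minimizes $\cL_\sigma(x,y^0;z^0) + \tfrac12\|x-x^0\|_{\cS_1}^2$, we get
\begin{equation*}
0 \in \partial f(\bx^+) + \cF^*z^0 + \sigma\cF^*(\cF\bx^+ + \cG y^0 - c) + \cS_1(\bx^+ - x^0).
\end{equation*}
Now substitute the definition $\bz^+ = z^0 + \sigma(\cF\bx^+ + \cG y^0 - c)$; this collapses the middle terms and yields $0 \in \partial f(\bx^+) + \cF^*\bz^+ + \cS_1(\bx^+-x^0)$, which is precisely the first row of the claimed inclusion (the first block of $\cS$ acts only on the $x$-coordinate). **Next** I would do the same for the $\by^+$-subproblem: $\by^+$ minimizes $\cL_\sigma(\bx^+,y;\bz^+) + \tfrac12\|y-y^0\|_{\cS_2}^2$, giving
\begin{equation*}
0 \in \partial g(\by^+) + \cG^*\bz^+ + \sigma\cG^*(\cF\bx^+ + \cG\by^+ - c) + \cS_2(\by^+ - y^0).
\end{equation*}
Here I cannot directly cancel the augmented term because $\bz^+$ was formed with $\cG y^0$, not $\cG\by^+$. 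Instead I rewrite $\sigma\cG^*(\cF\bx^+ + \cG\by^+ - c) = \sigma\cG^*(\cF\bx^+ + \cG y^0 - c) + \sigma\cG^*\cG(\by^+ - y^0) = \cG^*(\bz^+ - z^0) + \sigma\cG^*\cG(\by^+ - y^0)$. This turns the condition into $0 \in \partial g(\by^+) + \cG^*\bz^+ + \cG^*(\bz^+ - z^0) + (\sigma\cG^*\cG + \cS_2)(\by^+-y^0)$, which is exactly the second row of $\cT_l + \cS(\cdot)$ once one reads off the middle row of $\cS$ (namely $(\sigma\cG^*\cG+\cS_2)$ on the $y$-slot and $\cG^*$ on the $z$-slot).

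**Finally**, for the third row I need $0 = c - \cF\bx^+ - \cG\by^+ + \cG(\by^+ - y^0) + \sigma^{-1}(\bz^+ - z^0)$. Using $\sigma^{-1}(\bz^+ - z^0) = \cF\bx^+ + \cG y^0 - c$ from the multiplier update, the right side becomes $c - \cF\bx^+ - \cG\by^+ + \cG\by^+ - \cG y^0 + \cF\bx^+ + \cG y^0 - c = 0$, which holds as an identity. Reading the three rows together gives the system \eqref{ppa-2}, and conversely each step is reversible: given a solution of \eqref{ppa-2}, the strong monotonicity hypotheses ($\partial f + \sigma\cF^*\cF + \cS_1$ and $\partial g + \sigma\cG^*\cG + \cS_2$ maximal strongly monotone) guarantee the subproblems in \eqref{prox-ADMM} have unique minimizers, and the above manipulations run backwards to recover them. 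I expect the main obstacle to be purely bookkeeping: correctly tracking which iterate ($y^0$ versus $\by^+$) appears inside each augmented-Lagrangian gradient and re-expressing the mismatch as the off-diagonal $\cG^*$ coupling in $\cS$ — the well-definedness of the iterates is immediate from the stated monotonicity assumptions, and no deeper convex-analytic input is needed beyond the subdifferential sum rule, which applies since $\cL_\sigma$ differs from $f$ (resp. $g$) by a smooth quadratic.
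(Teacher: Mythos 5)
Your proposal is correct and follows essentially the same route as the paper's proof: write the subdifferential optimality conditions of the two subproblems, substitute the multiplier update to produce the $\cF^*\bz^+$ term and the $\cG^*(\bz^+-z^0)+\sigma\cG^*\cG(\by^+-y^0)$ coupling, and observe that the third row holds identically from $\sigma^{-1}(\bz^+-z^0)=\cF\bx^+ +\cG y^0-c$. The paper makes the reversibility explicit by invoking the iff-characterization of the argmin for strongly convex objectives (citing Rockafellar--Wets), which is exactly the role your appeal to the strong monotonicity hypotheses plays.
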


\begin{proof}
	Since $\partial f(\cdot)+\sigma\cF^*\cF+\cS_1$ and $\partial g(\cdot)+\sigma \cG^*\cG+\cS_2$ are assumed to be maximal and strongly monotone, the objective function of each subproblem in the semi-proximal ADMM scheme is a proper closed strongly convex function \cite[Exercise 8.8 and Exercise 12.59]{RW1998}. Consequently, one can deduce from \cite[Theorem 8.15 and Proposition 12.54]{RW1998} that $\bx^{+} = {}\underset{x}{\mathrm{argmin}}~ \cL_{\sigma}(x,y^0;z^0) + \frac{1}{2}\norm{x - x^0}_{\cS_1}^2$ if and only if 
	\begin{equation*}	
		0\in\partial f(\bx^{+}) + \cF^*(z^0 + \sigma(\cF\bx^{+} +  \cG y^{0} -c)) + \cS_1(\bx^{+} - x^0),	
	\end{equation*}
	and 
	$\by^{+} = {} \underset{y}{\mathrm{argmin}}~\cL_{\sigma}(\bx^{+},y;\bz^{+}) + \frac{1}{2}\norm{y - y^0}_{\cS_2}^2$
	if and only if 
	\begin{equation*}	
		0\in\partial g(\by^{+}) + \cG^*(\bz^{+}  + \sigma(\cF \bx^{+} +  \cG\by^{+} -c)) + \cS_2(\by^{+} - y^0).	
	\end{equation*}
	Hence, \eqref{prox-ADMM} is equivalent to 	
	\begin{equation}\label{ppa-admm-2}
		\left\{ \begin{aligned}
			& 0\in\partial f(\bx^{+}) + \cF^*(z^0 + \sigma(\cF\bx^{+} +  \cG y^{0} -c)) + \cS_1(\bx^{+} - x^0), \\[5pt]
			& \bz^{+} = z^0 + \sigma(\cF\bx^{+} +  \cG y^{0} -c),\\[5pt]
			& 0\in\partial g(\by^{+}) + \cG^*(\bz^{+}  + \sigma(\cF \bx^{+} +  \cG\by^{+} -c)) + \cS_2(\by^{+} - y^0),
			\\[5pt]
			&(x^{+},y^{+},z^{+}) = (1-\rho)(x^{0},y^{0},z^{0})
			+ \rho (\bx^{+},\by^{+},\bz^{+}).
		\end{aligned}\right.
	\end{equation}
	Note that for any $(x^{+},y^{+},z^{+})$ satisfying \eqref{ppa-admm-2}, one has 
	$$-\sigma^{-1}(\bz^{+} - z^0)= -(\cF\bx^{+} +  \cG y^{0} -c)= c - \cF\bx^{+}- \cG\by^{+} + \cG(\by^{+} - y^0).$$
	Therefore, \eqref{ppa-admm-2} can be recast as
	\begin{equation*}
		\left\{ \begin{aligned}
			& 0\in\partial f(\bx^{+}) + \cF^*\bz^{+} + \cS_1(\bx^{+} - x^0), \\[5pt]
			& 0\in\partial g(\by^{+}) + \cG^*\bz^{+} + (\sigma\cG^*\cG + \cS_2)(\by^{+} - y^0) + \cG^*(\bz^{+} - z^0), \\[5pt]
			& 0 = c - \cF\bx^{+}- \cG\by^{+} + \cG(\by^{+} - y^0) +
			\sigma^{-1}(\bz^{+} - z^0),
			\\[5pt]
			&(x^{+},y^{+},z^{+}) = (1-\rho)(x^{0},y^{0},z^{0})
			+ \rho (\bx^{+},\by^{+},\bz^{+}),
		\end{aligned}\right. 
	\end{equation*}
which is exactly the scheme \eqref{ppa-2} by the definition of $\cT_l$ and $\cS$. Hence, the point $(x^+,y^+,z^+)$ generated by the generalized semi-proximal ADMM scheme \eqref{prox-ADMM} is equivalent to the one generated by the proximal point scheme $\eqref{ppa-2}$. 
\end{proof}

\end{document}